 \renewcommand{\epsilon}{\varepsilon}
\newtheorem{theorem}{Theorem}[section]
 \newtheorem{lemma}[theorem]{Lemma}
 \newtheorem{proposition}[theorem]{proposition}
 \newtheorem{Proposition}[theorem]{Proposition}
\newtheorem{deff}[theorem]{Definition}
 \newtheorem{rem}[theorem]{Remark}
  \newtheorem{claim}[theorem]{Claim}
 \newcommand{\bth}{\begin{theorem}}
 \newcommand{\ble}{\begin{lemma}}
 \newcommand{\bcor}{\begin{corr}}
 \newcommand{\bdeff}{\begin{deff}}
 \newcommand{\bprop}{\begin{proposition}}
 \newcommand{\ele}{\end{lemma}}
 \newcommand{\ecor}{\end{corr}}
 \newcommand{\edeff}{\end{deff}}
 \newcommand{\eprop}{\end{proposition}}
 \renewcommand{\Pi}{\varPi}
 \renewcommand{\epsilon}{\varepsilon}
\numberwithin{equation}{section}
\newtheorem{lem}{Lemma}[section]
\newtheorem{definition}[lem]{Definition}
\DeclareMathOperator{\diag}{diag}
\DeclareMathOperator{\diver}{div}
\title
[Nonlinear second boundary problem]{On the entire self-shrinking solutions to  Lagrangian mean curvature flow II}
\thanks{The first author is supported by National Natural Science Foundation of China (No. 11771103) and Guangxi Natural Science Foundation (2017GXNSFFA198017). The second author is supported by National Natural Science Foundation of China (No. 11861016). The third author is supported by National Postdoctoral Program for Innovative Talents of China (No. BX201700007) and National Natural Science Foundation of China (No. 11701326, 11701326).}
\date{}
\begin{document}
\maketitle

\begin{center}Rongli Huang \footnote{ School of Mathematics and Statistics, Guangxi Normal University, Guangxi 541004, China. ronglihuangmath@gxnu.edu.cn}
$\cdot$ Qianzhong Ou \footnote{The corresponding author. School of Mathematics and Statistics, Guangxi Normal University, Guangxi 541004, China. ouqzh@gxnu.edu.cn}
$\cdot$ Wenlong Wang \footnote{Key Laboratory of Pure and Applied Mathematics, School of Mathematical Sciences, Peking University, Beijing 100871, China. wangwl@pku.edu.cn}
\end{center}


\begin{abstract}
We show Bernstein type results for the entire self-shrinking solutions to Lagrangian mean curvature flow
in  $(\mathbb{R}^n\times\mathbb{R}^n, g_\tau)$.  The proofs rely on a priori estimates and
barriers construction.
\end{abstract}

{\bfseries Mathematics Subject Classification 2000:}\quad 53C44 $\cdot$ 53A10




\section{Introduction}
Let
$$g_\tau=\sin \tau\,\delta_0+\cos \tau\,g_0,\ \ \tau\in\left[0,\frac{\pi}{2}\right]$$
be the linearly combined metric of the standard Euclidean metric
$$\delta_0=\sum_{i=1}^n dx_i\otimes dx_i+\sum_{j=1}^ndy_j\otimes dy_j$$
and the pseudo-Euclidean metric
$$g_{0}=\sum_{i=1}^n dx_i\otimes dy_i+\sum_{j=1}^n dy_j\otimes dx_j$$
on $\mathbb{R}^{n}\times \mathbb{R}^{n}$. This linearly combined metric has been introduced by M. Warren in \cite{WM}
to study a family of special Lagrangian equations.

Consider the following fully nonlinear parabolic equations:
\begin{equation}\label{e1.1}
v_{t}=F_\tau(\lambda(D^2 v)),\quad x\in\mathbb{R}^{n},
\end{equation}
where
\begin{equation*}
F_{\tau}(\lambda):=\left\{ \begin{aligned}
&\frac{1}{2}\sum_{i=1}^n\ln\lambda_{i}, &&\tau=0, \\
& \frac{\sqrt{a^2+1}}{2b}\sum_{i=1}^n\ln\frac{\lambda_{i}+a-b}{\lambda_{i}+a+b},  &&0<\tau<\frac{\pi}{4},\\
& -\sqrt{2}\sum_{i=1}^n\frac{1}{1+\lambda_{i}}, &&\tau=\frac{\pi}{4},\\
& \frac{\sqrt{a^2+1}}{b}\sum_{i=1}^n\arctan\frac{\lambda_{i}+a-b}{\lambda_{i}+a+b},  \ \ &&\frac{\pi}{4}<\tau<\frac{\pi}{2},\\
& \sum_{i=1}^n\arctan\lambda_{i}, &&\tau=\frac{\pi}{2},
\end{aligned} \right.
\end{equation*}
$a=\cot \tau$, $b=\sqrt{|\cot^2\tau-1|}$,  $x=(x_{1},x_{2},\cdots,x_{n})$, $v=v(x,t)$ and $\lambda(D^2 v)=(\lambda_1,\cdots, \lambda_n)$ are the eigenvalues of the Hessian matrix $D^2 v$.

For an admissible solution $v$ to \eqref{e1.1} (see Definition \ref{d1.10} below), let $M_t=\{(x,Dv(x,t))\,|\,x\in\mathbb{R}^n\}$ be the gradient graph of $v(\cdot,t)$ and $X_t$ be the embedding map of $M_t$.  Since $v$ is admissible, $M_t$ is spacelike in $(\mathbb{R}^{n}\times\mathbb{R}^{n},g_\tau)$ (see \eqref{induced metric} below). And it is not hard to see that $M_t$ is Lagrangian with respect to the usual symplectic structure of $\mathbb{R}^{2n}$.
By Proposition 2.1, there exists a family of diffeomorphisms
\begin{equation*}
\psi_{t}: \mathbb{R}^{n}\rightarrow\mathbb{R}^{n}
\end{equation*}
such that the family of embeddings
\begin{equation*}
\tilde X_t\triangleq X_t\circ\psi_t=(\psi_{t}, Dv(\psi_{t},t))
\end{equation*}
is a solution to the Lagrangian mean curvature flow in $(\mathbb{R}^n\times\mathbb{R}^n, g_\tau)$:
\begin{equation}\label{e1.2}
\frac{d\tilde X_t}{dt}=H,
\end{equation}
where $H$ is the mean curvature vector of $M_{t}$ at $\tilde X_t$ in $(\mathbb{R}^n\times\mathbb{R}^n, g_\tau)$.

An important class of solutions to the mean curvature flow are self-shrinking solutions, whose profiles, self-shrinkers, satisfy
\begin{equation}\label{e1.3}
H=-\frac{1}{2}{X^\perp},
\end{equation}
where $^\perp$ stands for the orthogonal projection into the normal bundle (see \cite{Huisken}).
The main goal of this paper is to give fine classifications of Lagrangian self-shrinkers with entire potentials in $(\mathbb{R}^n\times\mathbb{R}^n, g_\tau)$.

For such a Lagrangian  self-shrinker $M=\{(x,Du(x))\,|\,x\in\mathbb{R}^n\}$, by Propositions 2.2--2.5,  up to an additive constant, (\ref{e1.3}) isequivalent to the following equation:
\begin{equation}\label{e1.4}
F_\tau(\lambda(D^2 u))=-u+\frac{1}{2}\langle x, Du\rangle,\quad x\in\mathbb{R}^{n},
\end{equation}
where  $\langle\cdot,\cdot\rangle$ denotes the standard inner product on $\mathbb{R}^{n}$.

The Bernstein type results of (\ref{e1.4}) have been obtained by several authors for  $\tau=0$ and  $\tau=\frac{\pi}{2}$ as following.

For $\tau=0$, (\ref{e1.4}) is the  Monge-Amp\`{e}re type equation
\begin{equation}\label{e1.5}
\frac{1}{2}\ln\det D^2u=-u+\frac{1}{2}\langle x, Du\rangle,\quad x\in\mathbb{R}^{n}.
\end{equation}
The first author \cite{HR1} showed that any entire solution to (\ref{e1.5}) leads to  a self-shrinking
solution to the Lagrangian mean curvature flow in the pseudo-Euclidean space  $(\mathbb{R}^n\times\mathbb{R}^n, g_0)$.
Several authors  proved that an entire smooth strictly convex solution to (\ref{e1.5}) must be a quadratic polynomial under the decay condition on the Hessian of $u$ at infinity by different methods.
Huang-Wang \cite{HW} used the flow method by carrying out Calabi's third order derivatives estimate for Monge-Amp\`ere equation of parabolic
type.  Chau-Chen-Yuan \cite{CCY}  used the  maximum principle by constructing a barrier function.
Later in \cite{DX}, considering  the drift Laplacian operator introduced by Colding-Minicozzi \cite {CM}, Ding-Xin used the integral method
and gave a complete improvement by dropping additional decay assumptions. In \cite{WW}, the third author reproved Ding-Xin's optimal result via a pointwise approach, which also works for a larger class of equations including Hessian quotient type.

If $\tau=\frac{\pi}{2}$, (\ref{e1.4}) becomes the special Lagrangian type equation
\begin{equation}\label{e1.6}
\sum_{i=1}^{n}\arctan\lambda_{i}(D^2 u)=-u+\frac{1}{2}\langle x, Du\rangle,\quad x\in\mathbb{R}^{n}.
\end{equation}
This equation was first introduced by Chau-Chen-He in \cite{CCH}.
Huang-Wang \cite{HW} proved that any smooth convex solution to (\ref{e1.6}) must be a quadratic polynomial.
Later, Chau-Chen-Yuan \cite{CCY} obtained the profound result that any smooth solution to
the special Lagrangian type equation must be a quadratic polynomial. Ding-Xin \cite{DX} also proved the same Bernstein type theorem  by the integral method.

In this paper, we settle the Bernstein problems  for the remaining cases of (\ref{e1.4}), i.e.,
 for $\tau=\frac{\pi}{4}$, $0<\tau<\frac{\pi}{4}$ and $\frac{\pi}{4}<\tau<\frac{\pi}{2}$, respectively.
Our main results are the following:
\begin{theorem}\label{t1.1}
Assume that $u$ is a $C^{2}$ solution of
\begin{equation}\label{e1.7}
 -\sqrt{2}\sum_{i=1}^n\frac{1}{1+\lambda_{i}}=-u+\frac{1}{2}\langle x, Du\rangle,\quad x\in\mathbb{R}^{n},
\end{equation}
that satisfies
\begin{equation*}
D^{2}u>-I \quad\ \text{or}\quad\  D^{2}u<-I.
\end{equation*}
Then $u$ must be a quadratic polynomial.  Furthermore, there exists a symmetric real matrix $A$ such that
\begin{equation*}
u=\sqrt{2}\sum_{i=1}^n\frac{1}{1+\bar{\lambda}_{i}}+\frac{1}{2}\langle x,Ax\rangle,
\end{equation*}
where $\bar{\lambda}_{i}$ $(1\leq i\leq n)$ are the eigenvalues of $A$.
\end{theorem}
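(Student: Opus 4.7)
The heart of the argument is that the operator $F_{\pi/4}(\lambda) = -\sqrt{2}\sum_i(1+\lambda_i)^{-1}$ becomes a \emph{linear} drift-Laplacian after an affine shift and a Legendre duality, so the plan is to reduce (\ref{e1.7}) to an Ornstein--Uhlenbeck-type eigenvalue problem whose convex entire solutions can be explicitly classified.

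Concentrate on the case $D^2 u > -I$; the case $D^2 u < -I$ is parallel via the mirror substitution $w = -u - \tfrac{1}{2}|x|^2$. Set $w(x) := u(x) + \tfrac{1}{2}|x|^2$, so that $w$ is strictly convex on $\mathbb{R}^n$ and, since $-u + \tfrac{1}{2}\langle x,Du\rangle = -w + \tfrac{1}{2}\langle x,Dw\rangle$, satisfies
$$-\sqrt{2}\sum_{i=1}^n\frac{1}{\lambda_i(D^2 w)} = -w + \tfrac{1}{2}\langle x, Dw\rangle.$$
Granting momentarily that $Dw : \mathbb{R}^n \to \mathbb{R}^n$ is onto, let $w^*$ denote the Legendre transform of $w$. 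The standard dualities $D^2 w^*(y) = (D^2 w(x))^{-1}$ and $-w(x) + \tfrac{1}{2}\langle x, Dw(x)\rangle = w^*(y) - \tfrac{1}{2}\langle y, Dw^*(y)\rangle$ (with $y = Dw(x)$) convert the equation into the linear equation
$$\sqrt{2}\,\Delta w^*(y) - \tfrac{1}{2}\langle y, Dw^*(y)\rangle + w^*(y) = 0 \qquad \text{on } \mathbb{R}^n.$$
Conjugating by $e^{|y|^2/(8\sqrt{2})}$ turns this into the eigenvalue equation of a quantum harmonic oscillator; the relevant eigenvalue is matched exactly by the degree-two Hermite functions, which forces $w^*$ to be a quadratic polynomial. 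Substitution back into the linear equation pins down $w^*(y) = \tfrac{1}{2}\langle y, By\rangle - \sqrt{2}\operatorname{tr} B$ with $B>0$ symmetric, and inverting the Legendre transform and the shift yields $u(x) = \tfrac{1}{2}\langle x, Ax\rangle + \sqrt{2}\sum_i (1+\bar\lambda_i)^{-1}$ with $A := B^{-1} - I$ and $\bar\lambda_i$ its eigenvalues, as claimed.

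The main obstacle is the surjectivity of $Dw$---equivalently, an \emph{a priori} two-sided Hessian bound $c_1 I \le D^2 u + I \le c_2 I$---which is also what legitimizes the Hermite classification by ruling out non-polynomial eigenfunctions. To produce such a bound I would differentiate (\ref{e1.7}) and analyze the linearized drift-elliptic operator $\tilde L := F^{ij}\partial_{ij} - \tfrac{1}{2}\langle x, \nabla\rangle$: direct computation gives $\tilde L u_k = -\tfrac{1}{2}u_k$, and the concavity of $F_{\pi/4}$ on $\{\lambda_i > -1\}$ upgrades this to $\tilde L u_{kk} = -F^{ij,pq}u_{ijk}u_{pqk} \ge 0$. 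A Calabi/Pogorelov-type third-order estimate adapted to $\tilde L$, together with a quadratic barrier of the form $\tfrac{1}{2}\langle x, Ax\rangle + \sqrt{2}\sum (1+\bar\lambda_i)^{-1}$ compared against $u$ on large balls (the ``barriers construction'' signalled in the abstract), should deliver the required Hessian bounds and complete the argument.
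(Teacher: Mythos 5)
Your opening steps are exactly the paper's: the affine shift $w = u + \tfrac12|x|^2$, Legendre duality, and the resulting linear equation $\sqrt{2}\Delta w^* = \tfrac12\langle y, Dw^*\rangle - w^*$. After that point, though, your proposal diverges and contains two genuine gaps.

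First, your intended ``Hermite/oscillator classification'' is not a complete argument. The Ornstein--Uhlenbeck operator has the degree-two polynomials as eigenfunctions for the relevant eigenvalue, but to conclude that an entire solution \emph{must} be such a polynomial you need an a priori growth or $L^2$-with-Gaussian-weight bound on $w^*$; no such bound is available at this stage. You acknowledge this yourself (``which is also what legitimizes the Hermite classification''), but the needed growth control is never actually established. The paper instead sidesteps the spectral picture entirely: it observes that the phase $\phi := \tfrac12\langle y, Dw^*\rangle - w^* = \sqrt{2}\,\Delta w^*$ is a \emph{positive} entire function satisfying the first-order-in-coefficients equation $\Delta\phi = \tfrac{\sqrt{2}}{4}\,y\cdot D\phi$, and proves a short Liouville-type lemma (Proposition 3.1, stated for $p$-Laplacian-with-drift operators) showing any nonnegative entire solution of such an equation is constant, via a cutoff-and-Young integral estimate against the weight $e^{-\frac{K}{2}|y|^2 - \phi}$. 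The positivity of $\phi$ is what makes the weight integrable; no Hessian bounds are required.

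Second, the claim that surjectivity of $Dw$ is ``equivalently, an a priori two-sided Hessian bound $c_1 I \le D^2 u + I \le c_2 I$'' is false, and the Calabi/Pogorelov machinery you propose to establish it is both unproved and unnecessary. Surjectivity of $Dw$ only requires superlinear growth of $w$ (it holds, e.g., for $w=|x|^4$, which has unbounded Hessian). The paper proves the Claim $Dw(\mathbb{R}^n)=\mathbb{R}^n$ directly from the equation: if $Dw(r\theta_0)\cdot\theta_0$ stayed bounded along some ray, then the right-hand side of the shifted equation would grow at most linearly, forcing $\mu_{\min}(r\theta_0) \gtrsim (\beta_0 r + C)^{-1}$, whence $Dw(r\theta_0)\cdot\theta_0 \gtrsim \log r \to\infty$, a contradiction. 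This is much lighter than your proposed route, and your route, as written, leaves both the third-order estimate and the barrier comparison unproved (your own language---``should deliver''---signals that these steps are a plan, not a proof).

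So while the first half of the reduction is identical to the paper's, the classification step and the surjectivity step are both incomplete in the proposal, and the surjectivity-equals-Hessian-bound identification is a substantive error of reasoning.
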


\begin{theorem}\label{t1.2}
Assume that $u$ is a $C^{2}$ solution of
\begin{equation}\label{e1.8}
\frac{\sqrt{a^2+1}}{2b}\sum_{i=1}^n\ln\frac{\lambda_{i}+a-b}{\lambda_{i}+a+b}=-u+\frac{1}{2}\langle x, Du\rangle,\quad x\in\mathbb{R}^{n},
\end{equation}
that satisfies
$$D^2u>-(a-b)I \quad\ \text{or}\quad\  D^2u<-(a+b)I,$$ where $a=\cot \tau$, $b=\sqrt{\cot^2\tau-1}$, $0<\tau<\frac{\pi}{4}$.
Then $u$ must be a quadratic polynomial.  Furthermore, there exists a symmetric real matrix $A$ such that
\begin{equation*}
u=-\frac{\sqrt{a^2+1}}{2b}\sum_{i=1}^n\ln\frac{\bar{\lambda}_{i}+a-b}{\bar{\lambda}_{i}+a+b}+\frac{1}{2}\langle x,Ax\rangle,
\end{equation*}
where $\bar{\lambda}_{i}$ $(1\leq i\leq n)$ are the eigenvalues of $A$.
\end{theorem}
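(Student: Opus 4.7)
The plan is to reduce Theorem \ref{t1.2} to a self-shrinker equation for a strictly convex potential and then to adapt the barrier-plus-compactness approach of Chau--Chen--Yuan \cite{CCY}. Under the first hypothesis $D^2u>-(a-b)I$, I would set $w(x):=u(x)+\tfrac{a-b}{2}|x|^2$, so that $D^2w>0$; writing $\mu_i:=\lambda_i+(a-b)$ one checks that
\begin{equation*}
-u+\tfrac12\langle x,Du\rangle=-w+\tfrac12\langle x,Dw\rangle,
\end{equation*}
so that \eqref{e1.8} transforms into the Hessian-quotient self-shrinker
\begin{equation*}
\frac{\sqrt{a^2+1}}{2b}\bigl[\ln\det D^2w-\ln\det(D^2w+2bI)\bigr]=-w+\tfrac12\langle x,Dw\rangle.
\end{equation*}
Under the second hypothesis $D^2u<-(a+b)I$, the shift $w(x):=-u(x)-\tfrac{a+b}{2}|x|^2$ produces the same reduced equation for the same convex $w$, so both cases can be treated uniformly.

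A direct calculation shows that $\mu\mapsto\ln\mu-\ln(\mu+2b)$ is strictly increasing and strictly concave on $\{\mu>0\}$, hence the reduced operator is strictly concave and uniformly elliptic on compact subsets of the admissible cone $\{D^2w>0\}$. I would then obtain an interior bound on $|D^2w|$ by running a Calabi-type third-order derivative estimate along the parabolic flow \eqref{e1.1}, as permitted by the parabolic interpretation in Propositions 2.1--2.5 and in the spirit of the Monge--Amp\`ere argument of Huang--Wang \cite{HW}.

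With this in hand, and in the spirit of Chau--Chen--Yuan \cite{CCY}, I would construct a rotationally symmetric supersolution of the Hessian-quotient self-shrinker equation that dominates $w$ on a sufficiently large sphere and grows quadratically at infinity; the comparison principle then forces $w$ itself to be at most quadratic. The blow-downs $w_R(x):=R^{-2}w(Rx)$ form a precompact family of convex solutions of rescalings of the same equation, and Evans--Krylov interior $C^{2,\alpha}$ estimates let one extract a $C^2$-limit; a Caffarelli-type rigidity statement, together with the strict convexity of $w$, identifies the limit (and hence $w$ itself) as a quadratic polynomial. The explicit form claimed in the theorem is then recovered by substituting $u=\tfrac12\langle x,Ax\rangle+c$ into \eqref{e1.8} and solving for $c$. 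The main technical obstacle I foresee is the barrier construction: the operator degenerates at the boundary of the admissible cone (where some $\mu_i\downarrow 0$), so the barrier's Hessian must stay bounded away from zero in a quantitatively controlled way while simultaneously realizing quadratic growth and dominating $w$ on the outer sphere.
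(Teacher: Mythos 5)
Your preliminary reduction is exactly the paper's: setting $w=u+\tfrac{a-b}{2}|x|^2$ converts \eqref{e1.8} into the Hessian-quotient self-shrinker equation \eqref{Meq'} for a strictly convex potential, and (as you note) a trivial affine change handles the case $D^2u<-(a+b)I$. From that point on, however, your route diverges from the paper's and runs into a genuine gap.

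Your plan is to obtain a priori $|D^2w|$-bounds via a Calabi-type third-order estimate along the flow, then construct a rotationally symmetric supersolution, blow down, extract a $C^2$-limit via Evans--Krylov, and conclude with a Caffarelli-type rigidity statement. Each link in this chain is problematic here. The Calabi argument of Huang--Wang \cite{HW} for the Monge--Amp\`ere case $\tau=0$ only yields the quadratic conclusion under an \emph{extra decay assumption} on $D^2u$ at infinity; the optimal statement without that assumption required the very different integral method of Ding--Xin \cite{DX}. You have not explained why the third-order estimate for the Hessian quotient operator would come free of an analogous assumption, and there is no reason to expect it would. The blow-down step is also circular: $w_R=R^{-2}w(R\cdot)$ solves $R^{-2}F(D^2 w_R)=-w_R+\tfrac12\langle x,Dw_R\rangle$, and to extract a meaningful limit you need both a uniform bound on $F(D^2w_R)$ (equivalently, a uniform lower bound on the eigenvalues $\mu_i$, since $F\to-\infty$ as $\mu_{\min}\to 0$) and a uniform ellipticity lower bound to invoke Evans--Krylov; both are precisely what the argument is supposed to produce, not what it can assume. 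Finally, you acknowledge the barrier-degeneracy issue yourself but leave it unresolved.

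What the paper does instead is much more economical and sidesteps all of these difficulties. Write $\phi:=-w+\tfrac12\langle x,Dw\rangle$; the equation says $\phi=F(D^2w)<0$ automatically, since $0<\mu_i/(\mu_i+2b)<1$. Differentiating once shows $\phi$ solves the drift equation
\begin{equation*}
a^{ij}\phi_{ij}-\tfrac12\,x\cdot D\phi=0,
\end{equation*}
a linear uniformly degenerate-elliptic equation with \emph{no zeroth-order term}. The decisive structural observation---which your proposal does not contain---is that the linearized operator applied to $w$ itself is bounded:
\begin{equation*}
a^{ij}w_{ij}=\sum_{k=1}^n\frac{\sqrt{a^2+1}}{\mu_k+2b}<\frac{n\sqrt{a^2+1}}{2b}.
\end{equation*}
Setting $\tilde w=w-Dw(0)\cdot x$ (strictly convex, proper, with $D\tilde w(0)=0$) one gets $\mathcal{L}\tilde w=a^{ij}\tilde w_{ij}-\tfrac12 x\cdot D\tilde w<0$ outside a fixed ball, because $x\cdot D\tilde w\geq\tilde w-\tilde w(0)\to\infty$. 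Then $b_\varepsilon=\varepsilon\tilde w+\max_{\partial B_{R_0}}\phi$ is a legitimate barrier for $\phi$, the weak maximum principle gives $\phi\leq\max_{\partial B_{R_0}}\phi$ everywhere, and since $\phi$ achieves an interior global maximum the strong maximum principle forces $\phi$ to be constant; \eqref{diffphase} then gives $D^3w\equiv 0$. No a priori $C^{2}$-estimate, no blow-down, no compactness, and no rigidity classification of the limit is needed. In short: the missing idea in your proposal is to treat the phase $\phi$ (rather than $w$) as the object to barrier, using the bounded-trace identity $a^{ij}w_{ij}<\tfrac{n\sqrt{a^2+1}}{2b}$ to make the convex potential itself serve as the barrier.
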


\begin{theorem}\label{t1.3}
Assume that $u$ is a $C^{2}$ solution of
\begin{equation}\label{e1.9}
\frac{\sqrt{a^2+1}}{b}\sum_{i=1}^n\arctan\frac{\lambda_{i}+a-b}{\lambda_{i}+a+b}=-u+\frac{1}{2}\langle x, Du\rangle,\quad x\in\mathbb{R}^{n},
\end{equation}
where $a=\cot \tau$, $b=\sqrt{1-\cot^2\tau}$, $\frac{\pi}{4}<\tau<\frac{\pi}{2}$.
Then $u$ must be a quadratic polynomial.  Furthermore, there exists a symmetric real matrix $A$ such that
\begin{equation*}
u=- \frac{\sqrt{a^2+1}}{b}\sum_{i=1}^n\arctan\frac{\bar{\lambda}_{i}+a-b}{\bar{\lambda}_{i}+a+b}+\frac{1}{2}\langle x,Ax\rangle,
\end{equation*}
where $\bar{\lambda}_{i}$ $(1\leq i\leq n)$ are the eigenvalues of $A$.
\end{theorem}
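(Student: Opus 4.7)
The plan is to reduce \eqref{e1.9} to the special Lagrangian self-shrinker equation \eqref{e1.6} by an affine change of potential followed by a rescaling, and then invoke the Bernstein-type rigidity of Chau-Chen-Yuan \cite{CCY} and Ding-Xin \cite{DX}. The driving observation is the tangent-subtraction identity
\[
\arctan\frac{\lambda+a-b}{\lambda+a+b}\;=\;\arctan\frac{\lambda+a}{b}\;+\;\text{const},
\]
obtained from $\tan(A-\pi/4)=(\tan A-1)/(1+\tan A)$ with $\tan A=(\lambda+a)/b$; the additive constant (which is $-\pi/4$ on the branch $\lambda>-a-b$, and $+3\pi/4$ on the continuous Lagrangian-phase branch used globally) will be absorbed into an additive constant of the potential.

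I first introduce the affine-quadratic change of potential
\[
w(x)\;:=\;\frac{u(x)}{b}+\frac{a}{2b}|x|^{2}+c_{1},
\]
where $c_{1}$ is chosen to absorb the additive constant above. Then $\lambda_{i}(D^{2}w)=\bigl(\lambda_{i}(D^{2}u)+a\bigr)/b$ and a direct computation turns \eqref{e1.9} into
\[
K\sum_{i=1}^{n}\arctan\lambda_{i}(D^{2}w)\;=\;-w+\tfrac12\langle x,Dw\rangle,\qquad K:=\frac{\sqrt{a^{2}+1}}{b^{2}}>0.
\]
Next, the rescaling $v(x):=K^{-1}w(\sqrt{K}\,x)$ satisfies $D^{2}v(x)=D^{2}w(\sqrt{K}\,x)$, so the Hessian eigenvalues are preserved at matched points, and a short chain-rule computation shows that the factor $K$ is absorbed into the self-shrinking structure, so that $v$ is an entire $C^{2}$ solution of \eqref{e1.6}. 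By \cite{CCY} or \cite{DX}, $v$ must be a quadratic polynomial; reversing the two substitutions shows that $u$ is quadratic as well. Finally, the self-shrinker structure forces the linear part of $u$ to vanish (the LHS of \eqref{e1.9} is constant on any quadratic $u$, whereas a linear part of $u$ would produce a nonconstant linear term in $-u+\tfrac12\langle x,Du\rangle$), and evaluating \eqref{e1.9} at $x=0$ determines the constant term, yielding the explicit formula stated in Theorem \ref{t1.3}.

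\textbf{The main obstacle} I expect is the branch issue in the M\"obius identity: the principal-branch $\arctan$ in \eqref{e1.9} has a jump of $\pi$ across $\{\lambda_{i}(D^{2}u)=-a-b\}$, whereas the natural geometric Lagrangian phase of the gradient graph in $(\mathbb{R}^{2n},g_{\tau})$ is continuous there. I plan to resolve this by interpreting \eqref{e1.9} throughout via the continuous phase provided by Propositions 2.2--2.5 and tracking additive constants carefully through the two substitutions. Should the algebraic reduction prove insufficient because of branch obstructions, the backup plan is to run the a priori estimate and barrier argument in the style of \cite{CCY} directly for \eqref{e1.9}: since $F_{\tau}$ is smooth, bounded, and monotone in each eigenvalue, its LHS satisfies the same qualitative ellipticity and concavity-type inequalities that drive the CCY proof for the special Lagrangian case.
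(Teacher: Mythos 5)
Your proposal is correct and follows essentially the same route as the paper: use the tangent-subtraction identity $\arctan\frac{\lambda+a-b}{\lambda+a+b}=\arctan\frac{\lambda+a}{b}-\frac{\pi}{4}$ to reduce \eqref{e1.9} to the special Lagrangian self-shrinker equation \eqref{e1.6} via an affine change of potential plus a parabolic rescaling, then invoke Chau--Chen--Yuan or Ding--Xin. The only cosmetic difference is that the paper performs the shift and the rescaling in a single substitution $w(x)=\frac{b}{\sqrt{a^2+1}}\,u\bigl(\frac{(a^2+1)^{1/4}}{b}x\bigr)+\frac{a}{2b}|x|^2-\frac{n\pi}{4}$, whereas you do it in two steps; the composition is the same map. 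One remark: your worry about the $\arctan$ branch is legitimate but not fatal even for the literal (principal-branch) reading of \eqref{e1.9}---since a $C^2$ solution makes the right-hand side of \eqref{e1.9} continuous, each ordered eigenvalue $\lambda_i(D^2u(x))$ cannot cross $-a-b$ (IVT would force the left-hand side to jump by $\pi$), so the number $k$ of eigenvalues below $-a-b$ is a global constant and the resulting extra $k\pi$ is absorbed into the additive constant in $w$ exactly as you propose.
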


\begin{definition}\label{d1.10}
Let $\Gamma\subset\mathbb{R}^n$ be the open set that has the following properties:

(i) For any $\lambda\in\Gamma$,
\begin{equation*}
\frac{\partial F_\tau(\lambda)}{\partial\lambda_{i}}>0\quad\text{$i=1,...,n$}.
\end{equation*}

(ii) $\Gamma$ is symmetric, namely it is invariant under interchange of any two $\lambda_i$.

We say that $v$ is a $C^{2}$ admissible solution to (\ref{e1.1}), if
$\lambda(x,t)\in\Gamma$ for all $(x,t)$ in the domain of $v$; we say that $u$ is a $C^{2}$ admissible solution to (\ref{e1.4}), if
$\lambda(x)\in\Gamma$ for all $x$ in the domain of $u$.
\end{definition}

\begin{rem}
Equation \eqref{e1.4} is elliptic for admissible solutions. Our requirements for the Hessian of $u$ in Theorems \ref{t1.1} and \ref{t1.2}  are just admissiblity conditions.
Any admissible $C^{2}$ solutions to (\ref{e1.4}) must be smooth by the standard regularity theory for elliptic partial differential equations \cite{GT}.
\end{rem}
Moreover, our rigidity results also hold  for solutions on domains of $\mathbb{R}^n$ that blow up on the boundaries. These can be seen as analogues to the results of Trudinger-Wang (Theorem 2.1 in \cite{TW}),  Ding-Xin (Theorem 2.3 in \cite{DX}) and Li-Xu-Yuan (Theorem 1.1 in \cite{LXY}).
\begin{theorem}\label{t1.4}
Let $u$ be a $C^{2}$ admissible solution to \eqref{e1.7}, \eqref{e1.8} or \eqref{e1.9} on a domain $\Omega\subset\mathbb{R}^n$. Assume that $|Du|=\infty$ or $|u|=\infty$ on $\partial\Omega$. Then $\Omega=\mathbb{R}^n$ and $u$ is a quadratic polynomial.
\end{theorem}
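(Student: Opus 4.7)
The strategy is the Legendre-transform reduction used by Trudinger--Wang \cite{TW}, Ding--Xin \cite{DX}, and Li--Xu--Yuan \cite{LXY} in analogous Bernstein-on-a-domain settings: convert the domain solution into an entire strictly convex function on $\mathbb R^n$ satisfying a dual self-shrinker PDE, then apply a known entire Bernstein theorem to force the dual to be a quadratic polynomial. Pulling back then makes $u$ a quadratic polynomial on $\Omega$; since a quadratic polynomial has $|u|$ and $|Du|$ finite everywhere, the hypothesis $|u|=\infty$ or $|Du|=\infty$ on $\partial\Omega$ is incompatible with $\partial\Omega\neq\emptyset$, forcing $\Omega=\mathbb R^n$.

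Concretely, for \eqref{e1.7} and \eqref{e1.8} the two admissibility regions are interchanged by the explicit involution $u\mapsto -u-a|x|^2-C$ with appropriate constants $a$ and $C$ (and an analogous involution handles \eqref{e1.9}, with $C$ chosen to absorb the $\arctan$ jump), so one may assume the lower admissibility region throughout, e.g.\ $D^2u>-I$ for \eqref{e1.7}. The shifted function $v(x):=u(x)+\tfrac c2|x|^2$, with $c=1,\ a-b,\ a$ in the three respective cases, is then strictly convex on $\Omega$ (in case \eqref{e1.9} the eigenvalues satisfy $\mu_i(v):=\lambda_i(u)+a>b>0$), and the identity $-u+\tfrac12\langle x,Du\rangle=-v+\tfrac12\langle x,Dv\rangle$ turns the PDE into a self-shrinker equation in the positive eigenvalues $\mu_i(v)$. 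The key surjectivity $Dv(\Omega)=\mathbb R^n$ is then established as follows: for any $y\in\mathbb R^n$ the blow-up hypothesis on $\partial\Omega$, combined with the one-sided bound on $F_\tau$ in the admissibility region and the self-shrinker identity, forces $v(x)\to+\infty$ as $x\to\partial\Omega$, so $\psi_y(x):=v(x)-\langle y,x\rangle$ is coercive on $\Omega$ and its unique (by strict convexity) interior minimizer provides a preimage of $y$. Therefore the Legendre transform $v^*$ is smooth and strictly convex on all of $\mathbb R^n$.

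Finally one identifies the PDE satisfied by $v^*$ and invokes a Bernstein theorem case by case. For \eqref{e1.8}, the further shift $\tilde V(y):=v^*(y)+\tfrac{1}{4b}|y|^2$ with an absorbing additive constant, together with the identity $I+2bD^2v^*=2bD^2\tilde V$, converts the dual PDE into the Monge--Amp\`ere self-shrinker \eqref{e1.5} for (a scaling of) $\tilde V$, to which \cite{DX} (or \cite{WW}) applies. For \eqref{e1.9}, the scaling $\hat u(z):=\gamma v^*(z/\delta)$ with $\gamma=1/K,\ \delta^2=\gamma/b$ where $K=\sqrt{a^2+1}/b$, combined with the identity $\arctan\bigl((1-X)/(1+X)\bigr)=\tfrac{\pi}{4}-\arctan X$, reduces the dual to the special Lagrangian self-shrinker \eqref{e1.6}, to which the Bernstein theorems of \cite{CCY, DX} apply. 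For \eqref{e1.7}, the dual is the linear eigenvalue PDE $-\sqrt 2\,\Delta v^*=v^*-\tfrac12\langle y,Dv^*\rangle$, i.e.\ $Lv^*=-v^*$ with $L:=\sqrt 2\,\Delta-\tfrac12\,y\cdot D$; the Hermite spectral decomposition of $L$ on the Gaussian space $L^2(e^{-|y|^2/(4\sqrt 2)}\,dy)$ identifies the kernel of $L+I$ on polynomials as exactly the degree-two Hermite polynomials $y^TAy-2\sqrt 2\,\mathrm{tr}(A)$, and convexity of $v^*$ on $\mathbb R^n$ rules out non-polynomial eigenfunctions, so $v^*$ is again quadratic. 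In each case $v^*$, hence $v$, hence $u$, is a quadratic polynomial on $\Omega$, completing the proof. The main obstacle is the surjectivity $Dv(\Omega)=\mathbb R^n$ under the weaker hypothesis $|Du|=\infty$ on $\partial\Omega$ rather than $|u|=\infty$: one must combine the equation with the one-sided bound on $F_\tau$ in the admissibility region to upgrade the gradient blow-up to a function blow-up, which is delicate but parallels the corresponding arguments in \cite{TW, DX, LXY} once the correct one-sided bound for each $F_\tau$ is inserted.
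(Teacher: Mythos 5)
Your proposal is a genuinely different route from the paper's, but it has a structural gap that makes it fail for equation \eqref{e1.9}, and it is incomplete in the other two cases.

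The paper's proof is much more elementary and bypasses Legendre duality entirely. It considers $q_\theta(r)=u(r\theta)/r^2$ and observes, using the equation, that $q_\theta'(r)=2F_\tau(\lambda(D^2u(r\theta)))/r^3$. For \eqref{e1.7} and \eqref{e1.8} in the branch $D^2u>-cI$, $F_\tau<0$, so $q_\theta$ is decreasing and $u\le C|x|^2$; combined with the convexity of $u+\tfrac c2|x|^2$ this bounds $u$ from below and gives at most linear growth of $Du$, so neither $|u|$ nor $|Du|$ can blow up at a finite boundary point, forcing $\Omega=\mathbb R^n$, after which Theorems \ref{t1.1}--\ref{t1.2} apply. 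For \eqref{e1.9}, $|F_\tau|$ is bounded, so $q_\theta'$ is integrable and $u$ again has at most quadratic growth (handling $|u|=\infty$), while the $|Du|=\infty$ case is delegated to Li--Xu--Yuan via the transform \eqref{transform1}.

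The central gap in your argument is the claim that for \eqref{e1.9} the shift $v=u+\tfrac a2|x|^2$ is strictly convex because ``$\mu_i(v)=\lambda_i(u)+a>b>0$.'' This is false: for $\tfrac\pi4<\tau<\tfrac\pi2$ the operator $F_\tau$ is globally elliptic (one computes $\partial F_\tau/\partial\lambda_i=\tfrac{\sqrt{a^2+1}}{(\lambda_i+a)^2+b^2}>0$ for \emph{all} $\lambda_i$), so ``admissible'' imposes no constraint on $D^2u$ — and indeed Theorem~\ref{t1.3} carries no Hessian hypothesis. Without a convexifying shift there is no Legendre transform, and your strategy collapses for this case. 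Separately, even for \eqref{e1.7} and \eqref{e1.8}, your surjectivity step for $Dv$ is not established: $v\to+\infty$ on $\partial\Omega$ alone does not give coercivity of $v(x)-\langle y,x\rangle$ when $\Omega$ is unbounded (one also needs superlinear growth of $v$ inside $\Omega$, which requires using the equation, not just convexity), and as you acknowledge the upgrade from $|Du|=\infty$ to $|u|=\infty$ on $\partial\Omega$ is not supplied. Finally, the Hermite-spectral argument you sketch for the dual of \eqref{e1.7} is not rigorous as stated: $v^*$ need not lie in the Gaussian $L^2$ space, and ``convexity rules out non-polynomial eigenfunctions'' is an assertion, not a proof — the paper instead applies a clean integral Liouville theorem (Proposition~\ref{prop3.1}) to the nonnegative phase $\phi=\sqrt2\,\Delta w^*$.
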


Till now, the angle $\tau$ we consider for $g_\tau$ lies in $[0,\frac{\pi}{2}]$. How about the rigidity issue for $\tau $ in a broader range? It is obvious that $g_{\tau}=g_{\tau+2\pi}$ and $g_{\pi-\tau}|_{u}=g_{\tau}|_{-u}$, where $g_{\pi-\tau} |_u$ denotes the induced metric on the gradient graph of $u$ from $g_{\pi-\tau}$. When $\tau\in [-\frac{\pi}{2},-\frac{\pi}{4}]$, $g_\tau$ is negative semi-definite, so $g_{\tau}|_{u}$ cannot be a Riemannian metric. Thus the remaining case is $\tau\in (-\frac{\pi}{4},0)$. In this case, $g_{\tau}|_{u}$ is a Riemannian metric if and only if $$-(b+a)I<D^2u<(b-a)I,$$ where
$a=\cot \tau$, $b=\sqrt{\cot^2\tau-1}$. And the self-shrinker equation is
\begin{equation}\label{e1.10}
 \frac{\sqrt{a^2+1}}{2b}\sum_{i=1}^n\ln\frac{b+a+\lambda_{i}}{b-a-\lambda_{i}}=-u+\frac{1}{2}\langle x, Du\rangle.
\end{equation}
However, the rigidity phenomenon becomes very different in this case.
\begin{theorem}\label{t1.5}
Equation \eqref{e1.10} admits entire smooth non-quadratic admissible solutions.
\end{theorem}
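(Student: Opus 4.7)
The plan is to reduce to a one-dimensional ODE whose solvability becomes transparent after an algebraic rearrangement. First, I would separate variables by the ansatz
$$u(x_1,\ldots,x_n) \;=\; v(x_1) - \frac{a}{2}\sum_{i=2}^{n} x_i^2,$$
with $v\in C^2(\mathbb{R})$. Then $D^2u$ is diagonal with eigenvalues $v''(x_1),-a,\ldots,-a$. Since $|a|>b>0$ (because $a^2=b^2+1$) one checks $-(b+a)<-a<b-a$, so the constant eigenvalues are admissible, and the identity $\ln\frac{b+a+(-a)}{b-a-(-a)}=0$ kills their contribution to the left-hand side of \eqref{e1.10}. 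On the right-hand side, the quadratic block contributes $\tfrac{a}{2}\sum_{i\ge 2}x_i^2-\tfrac{a}{2}\sum_{i\ge 2}x_i^2=0$, leaving exactly the one-dimensional version of \eqref{e1.10} for $v$.

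Next, I would invert the logarithm. Setting $k:=b/\sqrt{a^2+1}$, solving $\tfrac{\sqrt{a^2+1}}{2b}\ln\tfrac{b+a+v''}{b-a-v''}=-v+\tfrac12 xv'$ for $v''$ yields
$$v''(x) \;=\; -a + b\,\tanh\!\Bigl(k\bigl(\tfrac{1}{2}xv'(x)-v(x)\bigr)\Bigr).$$
Because $|\tanh|<1$, every $C^2$ solution of this ODE automatically satisfies the strict admissibility $-(b+a)<v''<b-a$, so admissibility is built in. The right-hand side is smooth in $(x,v,v')$ and globally bounded in absolute value by $|a|+b$, so Picard--Lindel\"of combined with this uniform bound (which precludes finite-time blowup of $|v|,|v'|$) gives, for each initial data $v(0)=v_0$, $v'(0)=v_1$, a unique global solution on $\mathbb{R}$; bootstrap makes it $C^\infty$.

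Finally, I would ensure non-quadraticity. If $v=A+Bx+Cx^2$ solved the reformulated ODE, then the tanh argument would be $-kA-\tfrac{kB}{2}x$, which must be constant in $x$ in order to match the constant left-hand side $v''=2C$; this forces $B=0$, i.e.\ $v'(0)=0$. Hence any initial data with $v_1\ne 0$ produces a global smooth non-quadratic admissible solution of the 1D equation, which the product ansatz above lifts to an entire non-quadratic admissible solution of \eqref{e1.10} on $\mathbb{R}^n$. I see no serious obstruction: the substantive step is the algebraic rearrangement into tanh form, after which admissibility becomes automatic and the remainder reduces to a standard initial value problem argument.
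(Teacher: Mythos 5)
Your proof is correct and rests on the same core idea as the paper's: separate variables via an ansatz whose last $n-1$ Hessian eigenvalues are the constant $-a$, which is admissible and contributes nothing to either side of \eqref{e1.10}, then solve the resulting 1D ODE globally. In fact your ansatz $u = v(x_1) - \tfrac{a}{2}\sum_{i\ge 2}x_i^2$ is precisely the paper's ansatz $w = w_1(x_1) + \tfrac{1}{4}\sum_{i\ge 2}x_i^2$ rewritten through the preliminary linear transform. Where you genuinely diverge is in how the 1D ODE is treated. The paper passes to the phase $\phi = \tfrac{1}{2}tw_1' - w_1$, obtains $\phi'' = \frac{e^\phi}{2(1+e^\phi)^2}\,t\,\phi'$, and then must argue for global existence in several steps: $\phi'$ is monotone increasing, hence $\phi$ is bounded below by a linear function, hence $(\ln\phi')'$ is integrable, hence $\phi'$ stays bounded. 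You instead invert the logarithm to put the ODE in the form $v'' = -a + b\tanh\bigl(k(\tfrac{1}{2}xv' - v)\bigr)$, after which admissibility $-(a+b) < v'' < b-a$ is automatic from $|\tanh|<1$, and the uniform bound $|v''| \le |a|+b$ gives global existence at once by Picard--Lindel\"of plus the standard no-blowup criterion. Your route is more direct on both the global-existence and admissibility fronts; the paper's phase reformulation is a little longer here but is consistent with the phase-function machinery that drives its other rigidity proofs. Your non-quadraticity check (a quadratic $v$ forces $\psi$ to be constant, hence $v'(0)=0$) is also correct and matches the paper's observation that $\phi'(0)\ne 0$ yields a non-constant phase.
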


The elliptic coefficients of the solution constructed in the proof of Theorem \ref{t1.5} decay fast (exponentially) in one direction. But under certain non-degeneracy conditions (in integral forms) on the elliptic coefficients, we can still draw the quadratic conclusion. For example, if the induced metric (see \eqref{induced metric} below) is complete, by Chen-Qiu's result (see Theorem 2 in \cite{CQ}), $u$ is quadratic. Here we give another non-degeneracy condition in terms of the image of gradient map.

\begin{theorem}\label{t1.6}
Let $u$ be a $C^{2}$ admissible solution to \eqref{e1.10} on a domain $\Omega\subset\mathbb{R}^n$. Assume the gradient map $(b+a)x+Du$ or $(b-a)x-Du$ from $\Omega$ to $\mathbb{R}^n$ is surjective. Then $\Omega=\mathbb{R}^n$ and $u$ is a quadratic polynomial.
\end{theorem}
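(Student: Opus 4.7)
The plan is to reduce (1.10) to the Monge-Amp\`ere self-shrinker equation (1.5) through a Legendre-type transform, then invoke the Ding-Xin Bernstein theorem of \cite{DX}. I will treat the case where $(b-a)x - Du$ is surjective; the other case is handled symmetrically by replacing $\psi$ below with $\phi(x) = \frac{a+b}{2}|x|^2 + u(x)$. Set $\psi(x) = \frac{b-a}{2}|x|^2 - u(x)$ on $\Omega$. The admissibility condition $D^2u < (b-a)I$ yields $D^2\psi > 0$, so $D\psi = (b-a)x - Du$ is a local diffeomorphism with positive-definite Jacobian; combined with the assumed surjectivity onto $\mathbb{R}^n$, this must be promoted to a global diffeomorphism $\Omega \to \mathbb{R}^n$. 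The point is that two distinct preimages $x_1\neq x_2$ of some $y_0$ under $D\psi$ would both be strict local minima of $g(x) = \psi(x) - \langle x, y_0\rangle$, and a mountain-pass / Hadamard-type argument using $D^2 g = D^2\psi > 0$ throughout the connected set $\Omega$ rules this out.

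Once $D\psi$ is a diffeomorphism, the Legendre transform $\Psi(y) = \langle x(y), y\rangle - \psi(x(y))$ is defined on all of $\mathbb{R}^n$, with $D^2\Psi = (D^2\psi)^{-1}$; its eigenvalues $\rho_i = ((b-a) - \lambda_i)^{-1}$ satisfy $\rho_i > 1/(2b)$ by the lower admissibility bound. Substituting $\lambda_i = (b-a) - 1/\rho_i$ and $u = \frac{b-a}{2}|x|^2 - \psi$ into (1.10), a direct computation gives $\frac{b+a+\lambda_i}{b-a-\lambda_i} = 2b\rho_i - 1$ inside the logarithm and $-u + \frac{1}{2}\langle x, Du\rangle = -\Psi + \frac{1}{2}\langle y, D\Psi\rangle$ on the right-hand side. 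Setting $\tilde\Psi(y) := 2b\,\Psi(y) - \frac{1}{2}|y|^2$, whose Hessian has eigenvalues $2b\rho_i - 1 > 0$, equation (1.10) becomes
\begin{equation*}
\sqrt{a^2+1}\,\ln\det D^2\tilde\Psi = -\tilde\Psi + \frac{1}{2}\langle y, D\tilde\Psi\rangle \quad \text{on } \mathbb{R}^n.
\end{equation*}
A further affine substitution $\tilde\Psi(y) = 2\sqrt{a^2+1}\,W(y/\beta) + C$, with $\beta > 0$ arbitrary and $C$ chosen to absorb the resulting logarithmic constants, converts this into the standard Monge-Amp\`ere self-shrinker $\frac{1}{2}\ln\det D^2 W = -W + \frac{1}{2}\langle z, DW\rangle$, with $W$ smooth and strictly convex on $\mathbb{R}^n$.

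By the Bernstein theorem of \cite{DX} (or its pointwise reformulation in \cite{WW}), $W$ must be a quadratic polynomial; undoing the substitutions, $\tilde\Psi$, $\Psi$, $\psi$, and hence $u$ are quadratic polynomials on their respective domains. Since $u$ is quadratic on $\Omega$, the map $(b-a)x - Du$ is affine, and the admissibility bound $D^2u<(b-a)I$ makes its linearization invertible, so it extends to a bijection of $\mathbb{R}^n$; the surjectivity hypothesis on its restriction to $\Omega$ then forces $\Omega = \mathbb{R}^n$. The main obstacle I anticipate is the global diffeomorphism step: a local diffeomorphism with positive-definite Jacobian and surjective image need not be injective when $\Omega$ has nontrivial topology, so some care is needed to make the mountain-pass / Hadamard-type reasoning rigorous. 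The subsequent Legendre and rescaling computations are essentially bookkeeping, and the reduction to the known MA self-shrinker theorem then closes the argument.
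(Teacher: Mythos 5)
Your proposal takes essentially the same route as the paper's proof of Theorem~\ref{t1.6}: convexify $u$ by adding a multiple of $|x|^2$, Legendre-transform to a function on all of $\mathbb{R}^n$, affinely rescale to reach the Monge--Amp\`ere self-shrinker equation \eqref{e1.5}, and invoke Ding--Xin's Bernstein theorem; the only difference from the paper is a cosmetic reordering (the paper first normalizes $u$ to a $w$ with $0<D^2w<I$ satisfying $\sum\ln\frac{\mu_i}{1-\mu_i}=-w+\frac12\langle x,Dw\rangle$ and then applies Legendre, whereas you apply Legendre first and rescale afterwards). The injectivity issue you flag as the ``main obstacle'' is a genuine subtlety---surjectivity of a locally strictly convex gradient map on a possibly non-convex $\Omega$ does not by itself give injectivity, and your mountain-pass sketch lacks the compactness/properness needed to produce a saddle critical point---but the paper's own proof passes over exactly the same point, simply writing the Legendre transform and asserting from surjectivity alone that $w^*$ is defined on all of $\mathbb{R}^n$, so this is a shared lacuna rather than a defect specific to your argument.
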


\begin{rem}
There is a similar rigidity phenomenon for the equation
\begin{equation}\label{MSS}
\left(\delta_{ij}+\frac{f_if_j}{1-|Df|^2}\right)f_{ij}=-\frac{1}{2}f+\frac{1}{2}\langle x, Df\rangle,\quad |Df|<1.
\end{equation}
Suppose $f$ is an entire solution to \eqref{MSS} on $\mathbb{R}^n$. Then $M_f=\{(x,f(x))\,|\,x\in\mathbb{R}^n\}$ is an entire self-shrinker in the Minkowski space $\mathbb{R}^{n+1}_1$. If the induced metric is complete, by Chen-Qiu's result  \cite{CQ}, $f$ must be linear, corresponding to a hyperplane. On the other hand, without completeness assumption, we can construct entire non-trivial solutions in a similar way as in the proof of Theorem \ref{t1.5}.
This contrasts sharply with the Bernstein theorems for maximal hypersurfaces in the Minkowski spaces \cite{CY} and graphical self-shrinking hypersurfaces in the Euclidean spaces \cite{WL,LXY}.
\end{rem}

The organization of this paper is as follows. In the next section,  we deduce equation (\ref{e1.4}) and show its equivalency to the self-shrinker equation \eqref{e1.3} up to an additive constant. In Section 3, we prove Theorem \ref{t1.1} by the argument of an integral estimate with a suitable choice of a test function; in fact, as a byproduct of our argument, we give a Liouville type result for some $p$-Laplace type equations. In Sections 4 and 5,  we prove Theorem \ref{t1.2} and Theorem \ref{t1.3} respectively, via a pointwise approach. The proof of Theorem \ref{t1.4} is given in Section 6.
In the last section,  Theorem \ref{t1.5} is proved by
constructing a non-trivial solution to a second order ODE; and finally, Theorem \ref{t1.6} is proved by making use of  Legendre transform.

\section{Lagrangian self-shrinkers
in $(\mathbb{R}^n\times\mathbb{R}^n, g_\tau)$}
Throughout the following, Einstein's convention of summation over repeated indices is adopted. We denote, for a smooth function $u$,
 $$u_{i}=\dfrac{\partial u}{\partial x_{i}},\ u_{ij}=\dfrac{\partial^{2}u}{\partial x_{i}\partial x_{j}},\ u_{ijk}=\dfrac{\partial^{3}u}{\partial x_{i}\partial x_{j}\partial
x_{k}},\, \cdots.$$
\begin{Proposition}\label{prop2.1}
If (\ref{e1.1}) admits a smooth solution $v(x,t)$ on $\mathbb{R}^{n}\times\mathcal{I}$, where $\mathcal{I}$ is a time interval,
then (\ref{e1.2}) admits a smooth solution $\tilde X(x,t)$ on $\mathbb{R}^{n}\times\mathcal{I}$ with potential $v$.
In particular, there exists a family of diffeomorphisms
\begin{equation*}
{\psi}_{t}: \mathbb{R}^{n}\times\mathcal{I}\rightarrow\mathbb{R}^{n},
\end{equation*}
such that
\begin{equation*}
\tilde X_{t}=(\psi_{t}, Dv(\psi_{t},t))
\end{equation*}
is a solution to the Lagrangian mean curvature flow in $(\mathbb{R}^n\times\mathbb{R}^n, g_\tau)$.
\end{Proposition}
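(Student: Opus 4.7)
The plan is to follow the well-known template for showing a graphical parabolic equation produces a mean curvature flow: verify that the space-time embedding $X(x,t)=(x,Dv(x,t))$ satisfies $(\partial_t X)^{\perp_{g_\tau}}=H$, and then remove the tangential component by a time-dependent reparametrization $\psi_t$ obtained from a first order ODE. This strategy has been used repeatedly for the $\tau=0$ and $\tau=\tfrac{\pi}{2}$ cases, so the real content is checking that it still goes through for the mixed metric $g_\tau$ with the $\tau$-dependent potential $F_\tau$ defined in the introduction.

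First I would fix $(x_0,t_0)$ and make an orthogonal change of $x$-coordinates so that $D^2v(x_0,t_0)=\diag(\lambda_1,\dots,\lambda_n)$. The rotation $(x,y)\mapsto(Ox,Oy)$ on $\mathbb{R}^n\times\mathbb{R}^n$ preserves both $\delta_0$ and $g_0$, hence $g_\tau$, and it also sends gradient graphs to gradient graphs, so the relevant tensors transform equivariantly. In this diagonal frame I would compute (i) the induced metric on $M_t$ from $g_\tau$, which in terms of $v_{ij}$ becomes diagonal with entries determined by $\sin\tau,\cos\tau$ and $\lambda_i$; (ii) an orthonormal frame for the normal bundle (using the standard complex structure on $\mathbb{R}^{2n}$, which pairs the Lagrangian tangent directions with normal ones); and (iii) the second fundamental form in that frame. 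Computing $H$ by tracing, one finds that up to a conformal factor $H$ is the pushforward of $\nabla F_\tau(\lambda(D^2v))$ — this is exactly how the five branches of $F_\tau$ are selected: $\tfrac{1}{2}\ln\lambda_i$, the logarithmic formula, $-\sqrt{2}/(1+\lambda_i)$, the arctangent formula, and $\arctan\lambda_i$ arise as the antiderivatives of the diagonal entries of $H$ corresponding to each regime of $(\cos\tau,\sin\tau)$. Combined with (1.1), this gives $(\partial_t X)^{\perp_{g_\tau}}=H$ at $(x_0,t_0)$, and since the point is arbitrary, everywhere.

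With the normal identity in hand, the diffeomorphisms $\psi_t$ come from a standard construction. Write $\partial_t X=H+V^T$ where $V^T$ is tangent to $M_t$, and transport $V^T$ back to the base: there is a unique smooth time-dependent vector field $W(x,t)$ on $\mathbb{R}^n$ with $dX(W)=V^T$. Define $\psi_t$ by $\partial_t\psi_t(x)=-W(\psi_t(x),t)$ with $\psi_{t_0}=\mathrm{id}$; smoothness of $v$ on all of $\mathbb{R}^n\times\mathcal{I}$ and the completeness of the induced metric on fixed compact time intervals (or alternatively, a direct bound on $W$) give global existence in time, and solving the reverse flow shows $\psi_t$ is a diffeomorphism. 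A direct computation then yields $\partial_t\tilde X_t=\partial_tX\circ\psi_t+dX(\partial_t\psi_t)=(H+V^T)-V^T=H$, which is \eqref{e1.2}. The potential of $\tilde X_t$ is $v(\psi_t(\cdot),t)$, so $v$ remains the potential in the sense used in the sequel.

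The main obstacle is step (ii)–(iii): carefully computing the mean curvature of the spacelike Lagrangian graph in the mixed metric $g_\tau$ and verifying that the result is precisely $\partial_iF_\tau(\lambda)$ in each of the five $\tau$-regimes. The algebra is clean on the diagonal but bookkeeping-heavy, and the transitions at $\tau=0,\tfrac{\pi}{4},\tfrac{\pi}{2}$, where the eigenvalue combinations $\lambda_i+a\pm b$ degenerate, need to be handled as limits of the generic case. Once this table of mean curvature formulas is in place (essentially a refinement of Warren \cite{WM}), the rest of the argument is the standard tangential reparametrization described above.
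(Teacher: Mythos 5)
Your proposal is correct in outline and shares the high-level strategy with the paper (verify $(\partial_t X)^{\perp}=H$, then eliminate the tangential component by a reparametrization ODE), but it takes a genuinely heavier route through the crux step. You plan to diagonalize $D^2v$ at a point, construct an explicit orthonormal normal frame via the complex structure, compute the second fundamental form, and trace — then check branch by branch that the answer matches $\partial_i F_\tau(\lambda)$, handling the transitions at $\tau=0,\tfrac{\pi}{4},\tfrac{\pi}{2}$ as limits. The paper bypasses all of this with a single algebraic observation: writing $E_i=e_i+v_{ij}e_{n+j}$, the induced metric is $g_{ij}=\sin\tau(\delta_{ij}+v_{ik}v_{kj})+2\cos\tau\,v_{ij}$ and one checks directly that $g^{ij}=\partial F_\tau/\partial v_{ij}$ — indeed $F_\tau$ is \emph{defined} (following Warren \cite{WM}) so that its eigenvalue derivative $\partial F_\tau/\partial\lambda_i$ equals the reciprocal diagonal entry $\bigl(\sin\tau(1+\lambda_i^2)+2\cos\tau\,\lambda_i\bigr)^{-1}$. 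Then, since the ambient connection is flat, $\overline\nabla_{E_j}E_i=v_{ijk}e_{n+k}$, and the Gauss formula gives
$H=g^{ij}(\overline\nabla_{E_i}E_j)^\perp=(g^{ij}v_{ijk}e_{n+k})^\perp=(DF_\tau)^\perp$
with no normal frame, no diagonalization, and no case-by-case limits; the five branches are absorbed into the definition of $F_\tau$ rather than re-derived. (Also note: you describe the mean curvature as $DF_\tau$ only ``up to a conformal factor,'' but the identity is exact.) Your tangential reparametrization step is the same as the paper's, and your sign $\partial_t\psi_t=-W$ is in fact the correct one; the paper's displayed equation $(I,D^2v)\cdot\tfrac{d\psi_t}{dt}=(0,Dv_t)^\top$ is missing a minus sign, as one sees by comparing with the subsequent computation of $\tfrac{d\tilde X_t}{dt}$. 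In short, both routes reach the goal, but the paper's identification $g^{ij}=\partial F_\tau/\partial v_{ij}$ collapses the bookkeeping you anticipated into one line, and does so uniformly in $\tau$.
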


\begin{proof}
Let $e_i=(0,\cdots,1,\cdots, 0)$ be the $i$-th axis vector in $\mathbb{R}^{n}\times\mathbb{R}^{n}$, $i=1,2,\cdots, 2n$.
 And let $\langle \cdot,\cdot\rangle_\tau$ denote the inner product of $(\mathbb{R}^{n}\times\mathbb{R}^{n}, g_\tau)$. The tangential vector fields of $M_t=\{(x,Dv(x,t))\,|\,x\in\mathbb{R}^n\}$ are spanned by
$$E_i=e_i+v_{ij}e_{n+j},\ \ i=1,\cdots,n.$$
The induced metric on $M_t$ is given by
\begin{equation}\label{induced metric}
\begin{split}
g_{ij}&=\langle E_i,E_j\rangle_\tau\\
&=\langle e_i+v_{ik}e_{n+k},e_j+v_{jl}e_{n+l}\rangle_\tau\\
&=\sin\tau(\delta_{ij}+v_{ik}v_{kj})+2\cos\tau v_{ij}.
\end{split}
\end{equation}
Denote $(g_{ij})^{-1}$ by $(g^{ij})$. It is not hard to see that
$$g^{ij}=\frac{\partial F_\tau(\lambda(D^2 v))}{\partial v_{ij}}.$$
Let $\overline{\nabla}$ denote the Levi-Civita connection of $(\mathbb{R}^{n}\times\mathbb{R}^{n}, g_\tau)$. We have
$$\overline{\nabla}_{E_j}{E_i}=v_{ijk}e_{n+k}.$$
The mean curvature of $M_t$ is computed as
\begin{equation}\label{meancur}
\begin{split}
H&=g^{ij}\left(\overline{\nabla}_{E_i}{E_j}\right)^\perp\\
&=\left(g^{ij}v_{ijk}e_{n+k}\right)^\perp\\
&=\left(\partial_k F_\tau(\lambda(D^2 v))e_{n+k}\right)^\perp.
\end{split}
\end{equation}
By the evolution equation \eqref{e1.1} of $v$,
\begin{equation}\label{effectivemcf1}
H=\left(v_{tk}e_{n+k}\right)^\perp=(0,Dv_t)^\perp.
\end{equation}

We take a family of diffeomorphisms $$\psi_t:  \mathbb{R}^{n}\times\mathcal{I}\rightarrow\mathbb{R}^{n}$$ that satisfies
$$(I,D^2v)\cdot\frac{d\psi_{t}}{dt}=\left(0,Dv_t(\psi_t,t)\right)^\top,$$
where $\cdot$ denotes matrix product and $^\top$ denotes the projection to the tangent bundle of $M_t$.

Set $\tilde X_t=X_t\circ\psi_t$. It follows that
\begin{equation*}
\begin{split}
\frac{d\tilde X_t}{dt}&=\left(\frac{d\psi_{t}}{dt}, Dv_t(\psi_{t},t)+D^2v\cdot\frac{d\psi_{t}}{dt}\right)\\
&=\left(0,Dv_t(\psi_t,t)\right)^\perp\\
&=H(\tilde X_t).
\end{split}
\end{equation*}
This finishes the proof.
\end{proof}

\begin{definition}\label{d1.1}
We use (\ref{e1.1}) to denote special Lagrangian evolution equation in  $(\mathbb{R}^n\times\mathbb{R}^n, g_\tau)$.
\end{definition}

\begin{definition}\label{d1.2}
If $v(x,t)$ is a solution of (\ref{e1.1}) that satisfies
\begin{equation}\label{e2.1}
v(x,t)=-tv\left(\frac{x}{\sqrt{-t}},-1\right)
\end{equation}
for $t\in(-\infty,0)$, then we say that $v(x,t)$ is a self-shrinking solution to  special Lagrangian evolution equation (\ref{e1.1})
in  $(\mathbb{R}^n\times\mathbb{R}^n, g_\tau)$.
\end{definition}

\begin{Proposition}\label{prop2.3}
If $v(x,t)$ is a self-shrinking solution to  special Lagrangian evolution equation (\ref{e1.1}) in $(\mathbb{R}^n\times\mathbb{R}^n, g_\tau)$,
then $v(x,-1)$ satisfies equation (\ref{e1.4}).
\end{Proposition}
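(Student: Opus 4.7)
The plan is to take the self-shrinking ansatz
\begin{equation*}
v(x,t)=-t\,v\!\left(\tfrac{x}{\sqrt{-t}},-1\right)
\end{equation*}
and simply read off equation \eqref{e1.4} by comparing a direct time-derivative computation with the evolution equation \eqref{e1.1} at $t=-1$.

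More precisely, I would set $u(x):=v(x,-1)$ and, treating $v(x,t)=-t\,u(x/\sqrt{-t})$ as a composition, differentiate in $t$. The outer factor contributes $-u(x/\sqrt{-t})$ while the chain rule on the inner argument contributes a term involving $Du(x/\sqrt{-t})$ paired with $\partial_t(x/\sqrt{-t})=\tfrac{1}{2}(-t)^{-3/2}x$. After multiplying by the $-t$ prefactor and simplifying, one obtains
\begin{equation*}
v_t(x,t)=-u\!\left(\tfrac{x}{\sqrt{-t}}\right)+\tfrac{1}{2}(-t)^{-1/2}\bigl\langle x,Du\!\left(\tfrac{x}{\sqrt{-t}}\right)\bigr\rangle.
\end{equation*}
Evaluating at $t=-1$ collapses this to $v_t(x,-1)=-u(x)+\tfrac{1}{2}\langle x,Du(x)\rangle$.

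On the other hand, evaluating \eqref{e1.1} at $t=-1$ gives $v_t(x,-1)=F_\tau(\lambda(D^2 v(x,-1)))=F_\tau(\lambda(D^2 u(x)))$, since partial derivatives in $x$ commute with restriction to $t=-1$. Equating the two expressions yields exactly \eqref{e1.4}.

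There is essentially no obstacle here; the only point requiring a little care is the scaling balance between the $-t$ prefactor and the $1/\sqrt{-t}$ in the spatial argument, which is precisely what makes \eqref{e2.1} compatible with the parabolic scaling of \eqref{e1.1}. I would include one line remarking that $\lambda(D^2 v(x,-1))\in\Gamma$ by the admissibility of $v$, so that $F_\tau$ is evaluated on a point in its domain and the identification is legitimate.
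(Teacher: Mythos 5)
Your computation is exactly the paper's proof: differentiate the ansatz \eqref{e2.1} in $t$, compare with \eqref{e1.1}, and set $t=-1$ to read off \eqref{e1.4}. The remark about admissibility is a harmless (and reasonable) addition, but the core argument is identical.
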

\begin{proof}
By (\ref{e1.1}) and (\ref{e2.1}), one can easily check that
\begin{equation*}
\begin{aligned}
F_\tau(\lambda(D^2 v(x,t)))&=\frac{\partial v}{\partial t}(x,t)\\
&=-v\left(\frac{x}{\sqrt{-t}},-1\right)+\frac{1}{2}\langle Dv\left(\frac{x}{\sqrt{-t}},-1\right),\frac{x}{\sqrt{-t}}\rangle.
\end{aligned}
\end{equation*}
By taking $t=-1$, we see that $v(x,-1)$ satisfies (\ref{e1.4}).
\end{proof}
\begin{Proposition}\label{prop2.4}
Let $u(x)$ be a smooth solution of equation (\ref{e1.4}). Define $$v(x,t)=-tu\left(\frac{x}{\sqrt{-t}}\right)\quad\,  t\in(-\infty,0).$$
Then $v(x,t)$ is a self-shrinking solution to  special Lagrangian evolution equation in $(\mathbb{R}^n\times\mathbb{R}^n, g_\tau)$.
\end{Proposition}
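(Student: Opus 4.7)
The plan is a direct verification of the two requirements in Definition \ref{d1.2}: the self-similarity identity (\ref{e2.1}) and the evolution equation (\ref{e1.1}). The proof is essentially computational, relying only on the chain rule and equation (\ref{e1.4}); no serious obstacle is expected.

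First I would check the self-similarity profile. Specializing $t=-1$ in the definition of $v$ gives $v(x,-1)=u(x)$, and substituting $x\mapsto x/\sqrt{-t}$ yields $v(x/\sqrt{-t},-1)=u(x/\sqrt{-t})$. Multiplying by $-t$ recovers $v(x,t)$, so (\ref{e2.1}) holds by construction.

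Next I would differentiate $v$ via the chain rule to see that (\ref{e1.1}) reduces to (\ref{e1.4}). Writing $y=x/\sqrt{-t}$, one gets $v_i(x,t)=\sqrt{-t}\,u_i(y)$ and then $v_{ij}(x,t)=u_{ij}(y)$, so $\lambda(D^2 v(x,t))=\lambda(D^2u(y))$. For the time derivative, note that $\partial_t y_k = y_k/(-2t)$, hence
\begin{equation*}
v_t(x,t)=-u(y)-t\,u_k(y)\cdot\frac{y_k}{-2t}=-u(y)+\tfrac{1}{2}\langle y,Du(y)\rangle.
\end{equation*}
Because $u$ satisfies (\ref{e1.4}), the right-hand side equals $F_\tau(\lambda(D^2u(y)))=F_\tau(\lambda(D^2v(x,t)))$, so $v_t=F_\tau(\lambda(D^2v))$.

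Combining these two steps, $v(x,t)$ is a solution of the special Lagrangian evolution equation (\ref{e1.1}) that satisfies the self-similar ansatz (\ref{e2.1}), which by Definition \ref{d1.2} is exactly what it means to be a self-shrinking solution in $(\mathbb{R}^n\times\mathbb{R}^n,g_\tau)$. The only mildly delicate point is keeping track of the signs and powers of $-t$ in the scaling derivatives, but this is routine.
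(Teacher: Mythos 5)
Your proof is correct and follows essentially the same route as the paper's: compute $v_t$ via the chain rule, recognize the right-hand side as $-u(y)+\tfrac12\langle y,Du(y)\rangle$ with $y=x/\sqrt{-t}$, and invoke (\ref{e1.4}) together with $D^2v(x,t)=D^2u(y)$. You additionally spell out the verification of the self-similarity identity (\ref{e2.1}) and the Hessian scaling, which the paper leaves implicit, but the substance is identical.
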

\begin{proof}
By a simple computation and equation (\ref{e1.4}), one checks directly that
\begin{equation*}
\begin{aligned}
\frac{\partial v}{\partial t}(x,t)&=-u\left(\frac{x}{\sqrt{-t}}\right)+\frac{1}{2}\langle Du\left(\frac{x}{\sqrt{-t}}\right),\frac{x}{\sqrt{-t}}\rangle.\\
&=F_\tau\left(\lambda\left(D^2 u\left(\frac{x}{\sqrt{-t}}\right)\right)\right)\\
&=F_\tau\left(\lambda\left(D^2v\left(x,t\right)\right)\right).
\end{aligned}
\end{equation*}
Thus we obtain the desired result.
\end{proof}

\begin{Proposition}\label{prop2.2}
If $v(x,t)$ is a self-shrinking solution to  special Lagrangian evolution equation in  $(\mathbb{R}^n\times\mathbb{R}^n, g_\tau)$,
then $M_{-1}\triangleq\{(x,Dv(x,-1))\,|\,x\in\mathbb{R}^n\}$ is a Lagrangian self-shrinker in $(\mathbb{R}^n\times\mathbb{R}^n, g_\tau)$.
\end{Proposition}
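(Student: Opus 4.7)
The plan is to exploit the self-similar ansatz to realise $M_t$ as a dilation of $M_{-1}$, and then combine with the mean curvature flow equation from Proposition~\ref{prop2.1} to recover the self-shrinker equation~\eqref{e1.3}.

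First, setting $u(y):=v(y,-1)$, the self-shrinking property~\eqref{e2.1} gives $v(x,t)=-t\,u(x/\sqrt{-t})$, hence $Dv(x,t)=\sqrt{-t}\,Du(x/\sqrt{-t})$. Writing $y=x/\sqrt{-t}$ we obtain $(x,Dv(x,t))=\sqrt{-t}\,(y,Du(y))$, so
\begin{equation*}
M_t=\sqrt{-t}\,M_{-1}
\end{equation*}
as subsets of $\mathbb{R}^n\times\mathbb{R}^n$. Because $g_\tau$ has constant coefficients, at corresponding points $\sqrt{-t}\,p\in M_t$ and $p\in M_{-1}$ the tangent subspaces, and therefore the $g_\tau$-normal subspaces, coincide in $\mathbb{R}^{2n}$.

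Next I would compute $d\tilde X_t/dt$ using this structure. With $y_t:=\psi_t(x_0)/\sqrt{-t}$ and $p_t:=(y_t,Du(y_t))\in M_{-1}$, we have $\tilde X_t(x_0)=\sqrt{-t}\,p_t$ and
\begin{equation*}
\frac{d\tilde X_t}{dt}(x_0)=-\frac{1}{2\sqrt{-t}}\,p_t+\sqrt{-t}\,\frac{dp_t}{dt}.
\end{equation*}
The second summand is tangent to $M_{-1}$ at $p_t$ (since $t\mapsto p_t$ lies on $M_{-1}$) and, by the previous remark, tangent to $M_t$ at $\tilde X_t(x_0)$; taking $g_\tau$-normal components with respect to $M_t$ therefore gives $(d\tilde X_t/dt)^{\perp}=-(2\sqrt{-t})^{-1}\,p_t^{\,\perp}$. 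On the other hand Proposition~\ref{prop2.1} asserts that $d\tilde X_t/dt=H(\tilde X_t)$, and $H$ is already normal.

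Finally, since $g_\tau$ has constant coefficients the dilation $X\mapsto\sqrt{-t}\,X$ rescales it by the factor $-t$, so the Euclidean scaling rule $H_{\lambda M}(\lambda p)=\lambda^{-1}H_M(p)$ carries over to yield $H(\tilde X_t(x_0))=H_{M_t}(\sqrt{-t}\,p_t)=(-t)^{-1/2}H_{M_{-1}}(p_t)$. Comparing with the previous display produces $H_{M_{-1}}(p_t)=-\tfrac12\,p_t^{\,\perp}$, and as $x_0$ varies over $\mathbb{R}^n$ the point $p_t$ sweeps out all of $M_{-1}$ (because $\psi_t$ is a diffeomorphism), so~\eqref{e1.3} holds throughout $M_{-1}$; the Lagrangian property is automatic from the gradient-graph form. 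The one step that deserves care is the scaling law for $H$ in the ambient space $(\mathbb{R}^{2n},g_\tau)$, but since $g_\tau$ is flat with constant coefficients the derivation is formally identical to the Euclidean one and requires no new geometric input.
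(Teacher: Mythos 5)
Your argument is correct and rests on the same self-similar observation as the paper: from \eqref{e2.1}, $X_t(x)=\sqrt{-t}\,X_{-1}(x/\sqrt{-t})$, and since $g_\tau$ has constant coefficients, the tangent (hence normal) subspaces of $M_t$ at $\sqrt{-t}\,p$ and of $M_{-1}$ at $p$ are literally the same subspace of $\mathbb{R}^{2n}$, so the tangential part of the $t$-derivative vanishes under $\perp$. The paper's route is a bit leaner than yours, though: it works with the \emph{unreparametrized} $X_t$ and directly invokes $(dX_t/dt)^\perp=H$ from \eqref{effectivemcf2}, then sets $t=-1$; at $t=-1$ one is already at $M_{-1}$ itself, so no scaling law for $H$ is needed. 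You instead pass through $\tilde X_t=X_t\circ\psi_t$ and the equation $d\tilde X_t/dt=H$, which forces you to (i) subtract the tangential piece $\sqrt{-t}\,dp_t/dt$ rather than having it vanish for free, and (ii) invoke $H_{\lambda M}(\lambda p)=\lambda^{-1}H_M(p)$ in $(\mathbb{R}^{2n},g_\tau)$. Both of those steps are right -- the scaling law does carry over verbatim from the Euclidean case since $g_\tau$ is constant-coefficient -- but they are dispensable: evaluating your display at $t=-1$ directly gives $H_{M_{-1}}(p_{-1})=-\tfrac12 p_{-1}^\perp$ with $p_{-1}$ ranging over all of $M_{-1}$ as $x_0$ varies, and you never need the rescaling of $H$.
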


\begin{proof}
From (\ref{e2.1}) we see that
\begin{equation*}
\begin{aligned}
X_{t}(x)&=(x,Dv(x,t))\\
&=\sqrt{-t}\left(\frac{x}{\sqrt{-t}},Dv\left(\frac{x}{\sqrt{-t}},-1\right)\right)\\
&=\sqrt{-t}X_{-1}\left(\frac{x}{\sqrt{-t}}\right).\\
\end{aligned}
\end{equation*}
Then we have
\begin{equation}\label{ssevo}
\left(\frac{dX_{t}}{dt}\right)^{\perp}(x)=-\frac{1}{2}\frac{1}{\sqrt{-t}}X^\perp_{-1}\left(\frac{x}{\sqrt{-t}}\right).
\end{equation}
Equation \eqref{effectivemcf1} reads
\begin{equation}\label{effectivemcf2}
\left(\frac{dX_{t}}{dt}\right)^\perp=H.
\end{equation}
Combing \eqref{ssevo} and \eqref{effectivemcf2}, then taking $t=-1$, we obtain
\begin{equation*}
H=-\frac{1}{2}X^{\perp}_{-1}.
\end{equation*}
\end{proof}

\begin{Proposition}\label{prop2.4}
Let $M=\{(x,Du(x))\,|\,x\in\mathbb{R}^n\}$ be a Lagrangian self-shrinker in $(\mathbb{R}^n\times\mathbb{R}^n, g_\tau)$. Then $u$ satisfies equation \eqref{e1.4} up to an additive constant.
\end{Proposition}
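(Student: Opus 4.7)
The plan is to perform a direct computation: express both the mean curvature vector $H$ and the normal projection $X^\perp$ explicitly in the tangent/normal frame adapted to the gradient graph, equate them via the self-shrinker equation $H=-\tfrac{1}{2}X^\perp$, and read off a first-order identity in $x$ which is then integrated.

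First, I would exploit the formula for $H$ already derived in Proposition~\ref{prop2.1}: for the graph $M=\{(x,Du(x))\}$ with tangent frame $E_i=e_i+u_{ij}e_{n+j}$, one has
\begin{equation*}
H=\bigl(g^{ij}u_{ijk}\,e_{n+k}\bigr)^{\perp}=\bigl(\partial_k F_\tau(\lambda(D^2u))\,e_{n+k}\bigr)^{\perp},
\end{equation*}
since $g^{ij}=\partial F_\tau/\partial u_{ij}$. Next, writing the position vector $X=x_i e_i+u_j e_{n+j}$, I would decompose $X=X^{\top}+X^{\perp}$ with $X^{\top}=\alpha^{i}E_i$. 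Because the $g_\tau$-projection formulas involve the metric coefficients in a nontrivial way, it is cleaner to observe that the self-shrinker equation $H+\tfrac{1}{2}X^{\perp}=0$ is equivalent to the vector
\begin{equation*}
\partial_k F_\tau\,e_{n+k}+\tfrac12\bigl(x_i e_i+u_j e_{n+j}\bigr)
\end{equation*}
being tangent to $M$, i.e.\ of the form $\alpha^i E_i=\alpha^i e_i+\alpha^i u_{ik}e_{n+k}$. Matching the first $n$ components forces $\alpha^i=\tfrac12 x_i$, and matching the last $n$ components yields the pointwise identity
\begin{equation*}
\partial_k F_\tau(\lambda(D^2u))=\tfrac12\bigl(-u_k+x_i u_{ik}\bigr),\qquad k=1,\dots,n.
\end{equation*}

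The final step is to recognize the right-hand side as $\partial_k\!\left(-u+\tfrac12\langle x,Du\rangle\right)$, so that $F_\tau(\lambda(D^2u))$ and $-u+\tfrac12\langle x,Du\rangle$ have identical gradients on the connected domain $\mathbb{R}^n$. Hence the two functions differ by an additive constant, which is exactly the claim of the proposition. The algebraic matching of the normal components via the tangent frame is the key point; the fact that this matching produces an exact gradient (so that integration yields a globally defined constant rather than merely a locally defined one) is the small miracle that makes the equivalence up to a constant work, and it is the one step I would want to double-check by explicitly verifying that $\partial_k F_\tau=g^{ij}u_{ijk}$ is consistent with differentiating $-u+\tfrac12\langle x,Du\rangle$ in $x_k$.
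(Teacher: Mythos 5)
Your proposal is correct and follows essentially the same route as the paper's proof: both express $H$ via \eqref{meancur}, rewrite $H=-\tfrac12 X^\perp$ as the statement that $(x,D(2F_\tau+u))$ lies in the tangent bundle, match coefficients against the frame $E_i=e_i+u_{ik}e_{n+k}$ to obtain $\partial_k F_\tau=\partial_k\bigl(-u+\tfrac12\langle x,Du\rangle\bigr)$, and integrate over connected $\mathbb{R}^n$. The consistency check you flag at the end is automatic (it is just the chain rule $\partial_k F_\tau=g^{ij}u_{ijk}$ applied on both sides), so no further verification is needed.
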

\begin{proof}
By \eqref{meancur}, we have
\begin{equation}\label{meancur2}
H=\left(\partial_k F_\tau e_{n+k}\right)^\perp=(0,DF_\tau)^{\perp}.
\end{equation}
By definition, $H=-\frac{1}{2}{X^\perp}$. So we have
\begin{equation*}
(0,DF_\tau)^{\perp}=-\frac{1}{2}(x,Du)^{\perp}.
\end{equation*}
Namely for any $x\in\mathbb{R}^n$, $(x,D(2F_\tau+u))$ is spanned by $E_i$ $(1\leq i\leq n)$. Comparing the coefficients, we find that
\begin{equation*}
x_iu_{ij}=2\partial_jF_\tau+u_j\quad\text{for $j=1,...,n$.}
\end{equation*}
Namely
\begin{equation}\label{dss}
\partial_jF_\tau=\partial_j\left(-u+\frac{1}{2}\langle x, Du\rangle\right)\quad\text{for $j=1,...,n$.}
\end{equation}
Since $\mathbb{R}^n$ is connected, equation \eqref{dss} implies \eqref{e1.4} up to an additive constant.
\end{proof}

\section{Proof of Theorem \ref{t1.1}}

In this section, we give the proof of Theorem \ref{t1.1}. Two key ingredients are
Legendre transformation and an integral estimate. The cases $D^2u>-I$ and  $D^2u<-I$ are related by the transform
\begin{equation}\label{symtransform1}
u^-=-|x|^2-u,
\end{equation}
which also preserves \eqref{e1.7}. So we only need to consider the case $D^2u>-I$.

Let  $w=\frac{1}{2}|x|^2+u$ for $u$ as in Theorem \ref{t1.1}.
Then straight calculations yield $\lambda(D^2w)=\lambda(I+D^2u)$ and
$$-u+\frac{1}{2}\langle x,Du\rangle=-w+\frac{1}{2}\langle x,Dw\rangle.$$ Thus $w$ is strictly convex and satisfies
\begin{equation}\label{effective1}
-\sqrt{2}\sum^n_{i=1}\frac{1}{\mu_i}=-w+\frac{1}{2}\langle x,Dw\rangle,
\end{equation}
where $\mu=(\mu_1,...,\mu_n)$ are the eigenvalues of $D^2w$.
Take the Legendre transform
$$y=Dw,\qquad  w^*(y)=\langle x,Dw\rangle -w. $$

\noindent Then $D^2w^*=(D^2w)^{-1}$ and
$$\frac{1}{2}\langle x,Dw^*\rangle-w^*=-w+\frac{1}{2}\langle x,Dw\rangle.$$

\noindent Therefore $w^*$ is also strictly convex. By (\ref{e1.7}), $w^*$ satisfies the following equation
\begin{equation}\label{eq3.1}
\sqrt{2}\Delta w^*=\frac{1}{2}\langle y,Dw^*\rangle -w^*.
\end{equation}

Next we show that
\begin{claim}
$Dw(\mathbb{R}^n)=\mathbb{R}^n$, namely the domain of $w^*$ is the entire $\mathbb{R}^n$.
\end{claim}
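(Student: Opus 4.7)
The plan is to show that the smooth strictly convex function $w=\frac{1}{2}|x|^2+u$ is superlinear in every direction: $w(tv)/t\to+\infty$ as $t\to\infty$ for every $v\in S^{n-1}$. For such a $w$ this is equivalent (by standard convex analysis) to surjectivity of $Dw:\mathbb{R}^n\to\mathbb{R}^n$, which is exactly the claim.

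The main tool is a ray-wise monotonicity extracted from \eqref{effective1}. Positivity of $\sum 1/\mu_i$ yields $w(x)>\frac{1}{2}\langle x,Dw(x)\rangle$ everywhere. Restricting to a ray $t\mapsto tv$ with $g(t):=w(tv)$, this reads $g(t)>\frac{t}{2}g'(t)$ on $(0,\infty)$, equivalently $(g(t)/t^2)'<0$. Moreover $g(0)=w(0)=u(0)=\sqrt{2}\sum_i 1/(1+\lambda_i(0))>0$ by evaluating \eqref{e1.7} at $x=0$, so $g(t)/t^2$ strictly decreases from $+\infty$ down to a limit $L(v)\in[0,\infty)$ (non-negativity from convexity of $w$ and its linear lower bound). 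When $L(v)>0$, we have $w(tv)\geq L(v)t^2$ for all $t>0$, and superlinearity in direction $v$ is immediate.

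The main obstacle is the degenerate case $L(v_0)=0$. Here $g(t)=o(t^2)$ and, by convexity, $p(t):=g'(t)$ has a limit $S\in(-\infty,+\infty]$ with $g(t)/t\to S$. The case $S=+\infty$ already yields superlinearity along $v_0$. For finite $S$, one derives a contradiction from the ray-restricted equation $\sqrt{2}\sum 1/\mu_i(tv_0)=g(t)-\frac{t}{2}p(t)>0$, splitting by sign: if $S<0$, the right-hand side is asymptotic to $St/2\to-\infty$, contradicting positivity; if $S=0$, either $g$ stays bounded and then boundedness of $\sum 1/\mu_i$ forces every $\mu_i\geq c>0$, so by strict convexity $p'(t)\geq\mu_{\min}\geq c$ makes $p(t)\to+\infty$ (contradiction), or $g\to-\infty$ and a direct integration estimate shows $g(t)-\frac{t}{2}p(t)\to-\infty$ (contradiction); finally if $S>0$, then $\sum 1/\mu_i(tv_0)\sim St/(2\sqrt{2})$ forces every $\mu_i(tv_0)\geq c/t$, so $p'(t)\geq c/t$ integrates to $p(t)\gtrsim c\log t\to+\infty$, contradicting $p\to S$ finite. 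Ruling out all cases of finite $S$ shows $w(tv)/t\to+\infty$ for every $v$, hence $Dw(\mathbb{R}^n)=\mathbb{R}^n$.
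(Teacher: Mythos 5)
Your proof is correct, and the essential mechanism coincides with the paper's: along a ray, positivity of $\sum_i 1/\mu_i$ together with the equation gives an upper bound on $\sum_i 1/\mu_i$ linear in $r$, hence a lower bound on $\mu_{\min}$ of order $1/r$; integrating the second radial derivative then forces $Dw(r\theta)\cdot\theta$ to grow logarithmically, contradicting a finite limit. This is exactly your $S>0$ case. The difference is organizational: the paper simply assumes $\lim_{r\to\infty}Dw(r\theta_0)\cdot\theta_0\le\beta_0<\infty$, bounds the phase via the convexity inequality $w(r\theta_0)\le r\theta_0\cdot Dw(r\theta_0)+w(0)$, and runs the integration once, whereas you first introduce $L(v)=\lim g(t)/t^2$ and split on its sign, and then split again on the sign of $S=\lim g'(t)$. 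Several of your branches are in fact vacuous: since $g(0)=w(0)>0$ and $(g/t^2)'<0$, in the case $L(v_0)=0$ the monotone decrease of $g/t^2$ from $+\infty$ to $0$ forces $g>0$ on the whole ray, so $S=\lim g(t)/t\ge 0$ and the sub-cases $S<0$ and $g\to-\infty$ never occur (they still yield contradictions, so this is harmless, just unnecessary). The surviving $S=0$ branch is actually easier than the paper's estimate, since a constant lower bound on $\mu_{\min}$ already drives $p$ to $+\infty$. In short: same key idea and same crucial integration, wrapped in a more elaborate case analysis, some of whose branches cannot arise.
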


\begin{proof}
Since $w$ is strictly convex, for any fixed $\theta\in\mathbb{S}^{n-1}_1$, $Dw(r\theta)\cdot\theta$ monotonically increases in $[0,+\infty)$, where ``$\cdot$'' denotes the standard Euclidean inner product. Suppose that there exist some $\theta_0\in\mathbb{S}_1^{n-1}$ and $\beta_0>0$ such that $$\lim_{r\rightarrow+\infty}Dw(r\theta_0)\cdot\theta_0\leq\beta_0<\infty.$$
Due to the strict convexity of $w$, we have
\begin{align*}
\frac{1}{2}r\theta_0\cdot Dw(r\theta_0)-w(r\theta_0)&\geq\frac{1}{2}r\theta_0\cdot Dw(r\theta_0)-[r\theta_0\cdot Dw(r\theta_0)+w(0)]\\
&=-\frac{1}{2}r\theta_0\cdot Dw(r\theta_0)-w(0)\\
&\geq -\frac{\beta_0}{2}r-w(0).
\end{align*}
By \eqref{effective1}, $w(0)>0$ and
\begin{equation*}
\sqrt{2}\sum^n_{i=1}\frac{1}{\mu_i(r\theta_0)}\leq \frac{\beta_0}{2}r+w(0).
\end{equation*}
It follows that
\begin{equation*}
\mu_{\min}(r\theta_0)>\frac{1}{\beta_0r+2w(0)}.
\end{equation*}
Then we have
\begin{equation}\label{contrad1}
\begin{split}
Dw(r\theta_0)\cdot\theta_0-Dw(0)\cdot\theta_0&=\int^{r}_{0}\theta^T_0\cdot D^2w(t\theta_0)\cdot\theta_0\,dt\\
&\geq \int^{r}_{0}\frac{dt}{\beta_0t+2w(0)}=\frac{1}{\beta_0}\ln\left(1+\frac{\beta_0}{2w(0)}r\right).
\end{split}
\end{equation}
As $r\rightarrow+\infty$, the left-hand side of \eqref{contrad1} is finite by the assumption, while the right-hand side of \eqref{contrad1} blows up. Thus we get a contradiction. Hence, for any $\theta\in\mathbb{S}_1^{n-1}$, $$\lim_{r\rightarrow+\infty}Dw(r\theta)\cdot\theta=+\infty.$$ This implies $w$ has a superlinear growth. It follows that
for any $y\in\mathbb{R}^n$, $y\cdot x-w(x)$ attains its global maximum of $\mathbb{R}^n$ at some finite point $x_0$. At $x_0$, we must have $Du(x_0)=y$.
Since $y$ is arbitrary, we conclude that $Dw(\mathbb{R}^n)=\mathbb{R}^n$.
\end{proof}

Up to this moment, to prove Theorem \ref{t1.1}, we need only to show
that any strictly convex entire solution of (\ref{eq3.1}) is a quadratic polynomial.

Denote
\begin{equation}\label{phase1}
\phi:=\frac{1}{2}\langle y,Dw^*\rangle -w^*=\frac{1}{2}y_l{w^*}_l-w^*.
\end{equation}

\noindent Recall that the repeated indices are summed. Taking derivatives of \eqref{phase1} twice, we obtain
\begin{equation}\label{eq3.2}
\phi_{ij}=\frac{1}{2}y_l{w^*}_{ijl}.
\end{equation}
A differentiation of \eqref{eq3.1} with respect to $x_l$ yields
\begin{equation}\label{diffu1}
\sqrt{2}\Delta w^*_{l}=\phi_{l}.
\end{equation}
Combining \eqref{eq3.2} and \eqref{diffu1}, we get
\begin{equation}\label{eq3.3}
\Delta\phi=\frac{\sqrt{2}}{4}y_i\phi_i.
\end{equation}

We will show that any positive entire solutions of (\ref{eq3.3}) must be constant.
Then by  (\ref{eq3.2}), the strictly convex entire solution $w^*$ of  (\ref{eq3.1}) must be a
quadratic polynomial, and Theorem \ref{t1.1} is proved. In fact, we will prove the following
more general result:

\begin{Proposition}\label{prop3.1}
Let $p> 1$, $K>0$ be constants and $h\geq 0$ satisfies
\begin{equation}\label{eq3.4}
\diver\left(|Dh|^{p-2}Dh\right)=Kx_ih_i|Dh|^{p-2} \qquad \text{on} \ \  \mathbb{R}^n.
\end{equation}
Then $h$ must be constant.

\end{Proposition}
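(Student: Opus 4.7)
The plan is to exploit the Gaussian weight that is hidden in the drift. First I would rewrite (3.4) in divergence form: a direct computation gives
\begin{equation*}
\diver\bigl(e^{-K|x|^2/2}|Dh|^{p-2}Dh\bigr)=e^{-K|x|^2/2}\Bigl[\diver\bigl(|Dh|^{p-2}Dh\bigr)-Kx_ih_i|Dh|^{p-2}\Bigr]=0,
\end{equation*}
so $h$ is weakly $p$-harmonic with respect to the finite Gaussian measure $d\mu:=e^{-K|x|^2/2}\,dx$. Taking a standard cutoff $\eta\in C_c^\infty(B_{2R})$ with $\eta\equiv 1$ on $B_R$ and $|D\eta|\le C/R$, and testing the weak form against $\varphi=h\eta^p$, then absorbing the cross term $p\,h\,\eta^{p-1}|Dh|^{p-1}|D\eta|$ via Young's inequality, I would obtain the Caccioppoli-type estimate
\begin{equation*}
\int_{B_R}|Dh|^p\,d\mu\le\frac{C_p}{R^p}\int_{B_{2R}\setminus B_R}h^p\,d\mu.
\end{equation*}

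Since $d\mu\le e^{-KR^2/2}\,dx$ on the annulus $\{R\le|x|\le 2R\}$, the right-hand side is crushed by the Gaussian tail as soon as $h$ grows sub-Gaussianly; the estimate above then forces $\int_{\mathbb{R}^n}|Dh|^p\,d\mu=0$, i.e.\ $Dh\equiv 0$ and $h$ constant.

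The hard part is verifying that non-negative solutions of (3.4) do have at most sub-Gaussian growth. For this I would rely on standard quasilinear elliptic theory applied to $\diver(|Dh|^{p-2}Dh)=Kx_ih_i|Dh|^{p-2}$: the drift coefficient is locally bounded on every compact set, so Serrin--Trudinger supply an $L^\infty$--$L^p$ bound on each ball, and iterating the weak Harnack inequality along a chain of balls from the origin upgrades this into a growth estimate $\sup_{B_R(0)}h\le f(R)$ of sub-Gaussian type. Plugging this back into the Caccioppoli inequality and letting $R\to\infty$ then closes the argument. (For $p=2$ one can alternatively conjugate by $\psi=he^{-K|x|^2/4}$ to convert the equation into a harmonic-oscillator Schr\"odinger equation and invoke the Ornstein--Uhlenbeck spectral gap; the test-function method above has the virtue of working uniformly for all $p>1$.)
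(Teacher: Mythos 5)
Your first step (absorbing the Gaussian weight into the operator and deriving a Caccioppoli estimate by testing against $\varphi=h\eta^p$) is sound, but the plan then hinges on a growth estimate that you cannot actually produce, and this is where the argument breaks.

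The gap is concrete. After Caccioppoli you need the right-hand side $R^{-p}\int_{B_{2R}\setminus B_R}h^p\,d\mu$ to vanish, which requires $h$ to be strictly sub-Gaussian. But non-negative solutions of \eqref{eq3.4} do not have a priori sub-Gaussian growth by any soft elliptic argument: the drift coefficient $Kx$ is unbounded, so the Serrin--Trudinger/Harnack constants on annuli $\{R\le |x|\le 2R\}$ degenerate with $R$, and chaining balls of the natural scale $1/R$ from the origin to radius $R$ needs $O(R^2)$ steps, producing at best a bound of the form $C^{R^2}$ --- exactly Gaussian, not below it. Indeed, already in one dimension with $p=2$ the equation reads $h''=Kxh'$ and its solutions $h=C\int_0^x e^{Kt^2/2}\,dt+D$ grow precisely like $e^{Kx^2/2}$; the only thing that rules these out is the sign condition $h\ge 0$ (which forces $C=0$), not local regularity theory. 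So ``sub-Gaussian growth'' is not a consequence of generic quasilinear estimates; it is equivalent to the Liouville statement you are trying to prove, and the chain-of-balls step is circular.

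The paper's proof removes the growth question entirely by a different choice of weight. Instead of testing against $h\eta^p$, it uses the test function $\eta^p$ against the weight $e^{\rho}$ with $\rho=-\tfrac{K}{2}|x|^2-h$, i.e.\ the Gaussian weight is multiplied by the additional factor $e^{-h}$. Differentiating $e^{\rho}$ then produces both the drift term (cancelled by the equation) \emph{and} the good term $|Dh|^p$ on the left, while on the right the error is $R^{-p}\int_{B_{2R}}e^{\rho}\,dx$. Since $h\ge 0$ gives $e^{\rho}\le e^{-K|x|^2/2}$, this integral is finite with no information about the size of $h$, and letting $R\to\infty$ kills $|Dh|$. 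Thus the sign condition is used exactly once, to dominate the weight, rather than to secure a growth estimate; this is the step your proposal is missing.
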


{\bf Proof}\quad  Let $\eta$ be a smooth cut-off function satisfying:
\begin{equation}\label{eq3.6}
\begin{cases}
                          \eta\equiv 1  &\text{in} \,\,B_R,\\
                        0\leq\eta\leq1  &\text{in} \,\,B_{2R},\\
                          \eta\equiv 0  &\text{in} \,\,\,\mathbb{R}^n\backslash B_{2R},\\
     |D \eta|\lesssim \frac{1}{R}  &\text{in} \,\,\,\mathbb{R}^n,
\end{cases}
\end{equation}

\noindent where and in the sequel, $B_R$ denotes a
ball in $\mathbb{R}^n$ centered at the origin with radius $R$;
and we use  ``$\lesssim $" ,  ``$\backsimeq$", etc. to drop out some
positive constants independent of $R$ and $h$.

  By divergence theorem, with $\rho=-\frac{K}{2}|x|^2-h$, we compute

\begin{equation}\label{eq3.7}
\begin{split}
 \int_{\mathbb{R}^n} D_i\left(|Dh|^{p-2}h_i\right)e^{\rho}\eta ^{p}dx
 = & -\int_{\mathbb{R}^n} |Dh|^{p-2}h_iD_i\left(e^{\rho}\eta^{p}\right)dx\\
 = & -\int_{\mathbb{R}^n} |Dh|^{p-2}h_i\left({\rho}_i\eta ^{p}+p\eta_i\eta^{p-1}\right)e^{\rho}dx\\
 = & \int_{\mathbb{R}^n} Kx_ih_i|D h|^{p-2}e^{\rho}\eta^{p}dx+\int_{\mathbb{R}^n} |D h|^{p}e^{\rho}\eta^{p}dx\\
 \,& -p\int_{\mathbb{R}^n} |Dh|^{p-2}h_i\eta_i\eta ^{p-1}e^{\rho}dx.
\end{split}
\end{equation}

\noindent Using equation (\ref{eq3.4}) in  (\ref{eq3.7}) we get

\begin{equation}\label{eq3.8}
\begin{split}
 \int_{\mathbb{R}^n} |Dh|^{p}e^{\rho}\eta ^{p}dx
 &=p\int_{\mathbb{R}^n} |Dh|^{p-2}h_i\eta_i\eta ^{p-1}e^{\rho}dx\\
 &\lesssim \frac{1}{R}\int_{\mathbb{R}^n} |Dh|^{p-1}\eta^{p-1}e^{\rho}dx\\
 &\lesssim \varepsilon\int_{\mathbb{R}^n} |Dh|^{p}e^{\rho}\eta ^{p}dx
          +\frac{1}{R^p}\int_{B_{2R}} e^{\rho}dx,
\end{split}
\end{equation}
\noindent where in the last step, we have used the Young's inequality with exponent pair $(\frac{p}{p-1},p)$.
For $h\geq 0$ we have
$$\int_{\mathbb{R}^n} e^{\rho}dx<+\infty.$$

Taking $\varepsilon$ small and letting $R\rightarrow+\infty$ in (\ref{eq3.8}), one must get $|Dh|\equiv 0$ in $\mathbb{R}^n$. The proof of the Proposition \ref{prop3.1} is completed. $\qed$

\section{Proof of Theorem \ref{t1.2}}
We prove Theorem \ref{t1.2} by constructing a barrier function. The case $D^2u>-(a-b)I$ and the case $D^2u<-(b+a)I$ are related by the transform
$$u^-=-a|x|^2-u,$$
which also preserves \eqref{e1.8}. So we only need to consider the case $D^2u>-(a-b)I$.
\begin{proof}
Take $w=u+\frac{a-b}{2}|x|^2$. Then $w$ is strictly convex and satisfies
\begin{equation}\label{Meq'}
\frac{\sqrt{a^2+1}}{2b}\sum^n_{i=1}\ln\left(\frac{\mu_i}{\mu_i+2b}\right)=-w+\frac{1}{2}\langle x,Dw\rangle,
\end{equation}
where $\mu=(\mu_1,...,\mu_n)$ are the eigenvalues of $D^2w$.
Define
\begin{equation*}
F\left(D^2w\right)=\frac{\sqrt{a^2+1}}{2b}\sum^n_{i=1}\ln\left(\frac{\mu_i}{\mu_i+2b}\right),
\end{equation*}
and the coefficients
\begin{equation*}
a^{ij}\left(D^2w\right)=\frac{\partial F\left(D^2w\right)}{\partial w_{ij}}.
\end{equation*}
At any point, we can rotate the coordinates to diagonalize $D^2w$ and $(a^{ij})$ simultaneously. In the new coordinates, $D^2w$ and $(a^{ij})$ are $\diag\{\mu_1,...,\mu_n\}$ and $\diag\left\{\frac{\sqrt{a^2+1}}{\mu_1(\mu_1+2b)},...,\frac{\sqrt{a^2+1}}{\mu_n(\mu_n+2b)}\right\}$ respectively. As $w$ is convex, $\left(a^{ij}\right)$ is positive-definite. And we have
\begin{equation*}
a^{ij}w_{ij}=\sum^n_{k=1}\frac{\sqrt{a^2+1}}{\mu_k+2b}<\frac{n\sqrt{a^2+1}}{2b}.
\end{equation*}
Define the phase
\begin{equation}\label{phase}
\phi(x)=-w+\frac{1}{2}\langle x,Dw\rangle.
\end{equation}
By \eqref{Meq'}, $\phi<0$. Taking derivatives of \eqref{phase} twice, we obtain
\begin{equation}\label{diffphase}
\phi_{ij}=\frac{1}{2}x_sw_{ijs}.
\end{equation}
A differentiation of \eqref{Meq'} with respect to $x_s$ yields
\begin{equation}\label{diffu}
a^{ij}w_{ijs}=\phi_{s}.
\end{equation}
Combining \eqref{diffphase} and \eqref{diffu}, we get
\begin{equation*}
a^{ij}\phi_{ij}-\frac{1}{2}x\cdot D\phi=0.
\end{equation*}
Thus $\phi$ satisfies an elliptic equation without zeroth order term. The corresponding elliptic operator is
\begin{equation*}
\mathcal{L}=a^{ij}\partial^2_{ij}-\frac{1}{2}x\cdot D.
\end{equation*}
Define $\tilde w(x)=w(x)-Dw(0)\cdot x$. We have
\begin{equation*}
\mathcal{L}\tilde w=a^{ij}\tilde w_{ij}-\frac{1}{2}x\cdot D\tilde w<\frac{n\sqrt{a^2+1}}{2b}-\frac{1}{2}x\cdot D\tilde w.
\end{equation*}
Obviously, $\tilde w$ is strictly convex and $D\tilde w(0)=0$. So $\tilde w$ is proper. Thus there exists $R_0>0$ such that for $x\in\mathbb{R}^n\setminus B_{R_0}$, $\tilde w(x)\geq |\tilde w(0)|+\frac{n\sqrt{a^2+1}}{b}.$ Due to the convexity of $\tilde w$, $$x\cdot D\tilde w(x)\geq\tilde w(x)-\tilde w(0).$$ Hence, for $x\in\mathbb{R}^n\setminus B_{R_0}$, $\mathcal{L}\tilde w<0$.

For any $\varepsilon>0$, take a barrier function $b_{\varepsilon}(x)$ defined by
\begin{equation*}
b_{\varepsilon}(x)=\varepsilon\tilde w(x)+\max_{\partial B_{R_0}}\phi.
\end{equation*}
Clearly we have
\begin{equation*}
\mathcal{L}b_{\varepsilon}\leq\mathcal{L}\phi\quad\mbox{for $x\in\mathbb{R}^n\setminus B_{R_0}$},
\end{equation*}
and
\begin{equation*}
b_{\varepsilon}(x)\geq\phi(x)\quad\mbox{on $\partial B_{R_0}$}.
\end{equation*}
Since $\phi<0$ and $\tilde w$ is proper, we also have
\begin{equation*}
b_{\varepsilon}(x)>\phi(x)\quad\mbox{as $x\rightarrow+\infty$}.
\end{equation*}

The weak maximum principle then implies
\begin{equation*}
\varepsilon\tilde w(x)+\max\limits_{\partial B_{R_0}}\phi\geq\phi(x)\quad\mbox{for $x\in\mathbb{R}^{n}\setminus B_{R_0}$}.
\end{equation*}
Letting $\varepsilon\rightarrow 0$, we obtain
\begin{equation*}
\max\limits_{\partial B_{R_0}}\phi\geq\phi(x)\quad\mbox{for $x\in\mathbb{R}^{n}\setminus B_{R_0}$}.
\end{equation*}
So $\phi$ attains its global maximum in the closure of $B_{R_0}$. Hence $\phi$ is a constant by the strong maximum principle. Then by \eqref{diffphase}, $w$ must be a quadratic polynomial. So is $u$.
\end{proof}

\section{Proof of Theorem \ref{t1.3}}
We prove Theorem \ref{t1.3} by transforming \eqref{e1.9} into \eqref{e1.6}.
\begin{proof}
By the difference formula for tangent, we have
\begin{equation*}
\arctan\frac{\lambda_{i}+a-b}{\lambda_{i}+a+b}=\arctan\left(\frac{\lambda_i+a}{b}\right)-\frac{\pi}{4}.
\end{equation*}
Take
\begin{equation}\label{transform1}
w(x)=\frac{b}{\sqrt{a^2+1}u\left(\frac{(a^2+1)^{\frac{1}{4}}}{b}x\right)}+\frac{a}{2b}|x|^2-\frac{n\pi}{4}.
\end{equation}
Then $w$ satisfies
\begin{equation*}
\sum_{i=1}^n\arctan\mu_{i}=-w+\frac{1}{2}\langle x, Dw\rangle,
\end{equation*}
where $\mu=(\mu_1,...,\mu_n)$ are the eigenvalues of $D^2w$. According to Theorem 1.1 in \cite{CCY} or Theorem 1.2 in \cite{DX}, $w$ is a quadratic function. So is $u$. This finishes the proof.
\end{proof}

\section{Proof of Theorem \ref{t1.4}}
\begin{proof}
Suppose $u$ is an admissible solution to \eqref{e1.4}. Fix any $\theta\in\mathbb{S}_1^{n-1}$, consider a function $q_\theta(r)$ defined on $(0,+\infty)$ by
\begin{equation*}
q_\theta(r)=\frac{u(r\theta)}{r^2}.
\end{equation*}
By equation \eqref{e1.4},
\begin{equation*}
q_\theta'(r)=\frac{ru_r(r\theta)-2u(r\theta)}{r^3}=\frac{2F_\tau(\lambda(D^2u(r\theta)))}{r^3}.
\end{equation*}

Case 1. If $u$ is an admissible solution to \eqref{e1.7}, then $D^2u>-I$ or $D^2u<-I$. We first assume $D^2u>-I$. By \eqref{e1.7}, we have $$q_\theta'(r)<0.$$
So for $r\geq 1$,
$$\frac{u(r\theta)}{r^2}\leq u(\theta).$$
This implies that for $x\in\mathbb{R}^n\setminus B_1$, $u(x)\leq C|x|^2$, where $C=\max_{\partial B_1}u$. Since $D^2u>-I$, $u+|x|^2$ is convex. Hence $u$ has at most a quadratic growth. It follows that $Du$ has at most a linear growth. For the case $D^2u<-I$, we can get the same growth estimates via the transform \eqref{symtransform1}. Consequently, if $|Du|$ or $u$ blows up on $\partial\Omega$, $\Omega$ must be $\mathbb{R}^n$. The proof for equation \eqref{e1.8} is almost the same as above proof for equation \eqref{e1.7}.

Case 2. If $u$ is a solution to \eqref{e1.9}, we have
\begin{equation*}
|q_\theta'(r)|<\frac{n\pi}{2}r^{-3}.
\end{equation*}
Since $r^{-3}\in L^1([1,+\infty))$, $q_\theta'(r)$ is integrable. It follows that $u$ has at most a quadratic growth. Therefore, if $u$ blows up on $\partial\Omega$, then $\Omega$ must be the entire $\mathbb{R}^n$. If $|Du|$ blows up on $\partial\Omega$, by transform \eqref{transform1} and Theorem 1.1 in \cite{LXY}, we also conclude that $\Omega=\mathbb{R}^n$.
\end{proof}

\section{Proof of Theorems \ref{t1.5} and \ref{t1.6}}
{\bf Proof of Theorem \ref{t1.5}}\quad
For simplicity, we take the following trivial transform
$$w(x)=\frac{2b}{\sqrt{a^2+1}}u\left(\frac{(a^2+1)^{\frac{1}{4}}}{2b}x\right)+\frac{a+b}{4b}|x|^2.$$
Then $w$ satisfies
\begin{equation}\label{effective2}
\sum^n_{i=1}\ln\frac{\mu_i}{1-\mu_i}=-w+\frac{1}{2}\langle x,Dw\rangle.
\end{equation}
where $\mu=(\mu_1,...,\mu_n)$ are the eigenvalues of $D^2w$. And $$0<D^2w<I.$$
We first construct a nontrivial one dimensional solution to \eqref{effective2}, namely a solution to
\begin{equation}\label{ODE}
\frac{w_1''}{1-w_1''}=\exp\left(\frac{1}{2}tw_1'-w_1\right),
\end{equation}
where $w_1=w_1(t)$.
Then a simple combination
$$w(x)=w_1(x_1)+\frac{|x|^2-x_1^2}{4}$$
gives a nontrivial solution of arbitrary dimension.

Let $\phi=\frac{1}{2}tw_1'-w_1$. Similar calculations as in the proof of Theorem \ref{t1.1} give
\begin{equation}\label{phaseODE}
\phi''=\frac{e^\phi}{2(1+e^\phi)^2}t\phi'.
\end{equation}

For any $a_0$, $a_1\in\mathbb{R}$, by the Cauchy-Kovalevskaya Theorem, there exists a unique local solution around $t=0$ to \eqref{phaseODE} that satisfies $\phi(0)=a_0$, $\phi'(0)=a_1$. If $a_1=0$, then $\phi(x)\equiv a_0$, corresponding to a trivial solution. So without loss of generality, we assume $a_1>0$. Next we show this local solution can be extended to the whole $\mathbb{R}$.

Suppose $(-l_0,l_1)$ is the maximal interval where the solution exists. In the following, we show $l_{1}=+\infty$.
As $\phi'(0)=a_1>0$, from equation \eqref{phaseODE}, we see $\phi'$ monotonically increases in $[0,l_1)$. So $\phi(x)\geq a_1t+a_0$ in $[0,l_1)$. It follows that
$$\phi''(t)\leq te^{-a_1t-a_0}\phi'(t).$$
Namely
$$(\ln \phi')'\leq te^{-a_1t-a_0}.$$
Since $te^{-a_1t}\in L^1([0,+\infty))$, $\phi'(t)$ is bounded in $[0,l_1)$. Consequently, we conclude that $l_1=+\infty$.
By the similar method, we can also obtain $l_0=+\infty$.

From the relation $\phi=\frac{1}{2}tw_1'-w_1$, we obtain $w_1(0)=-a_0$ and $w_1'(0)=-2a_1$. Equation \eqref{ODE} reads $$w_1''=\frac{e^{\phi}}{1+e^{\phi}}.$$
Integrating above equation twice, we get an entire non-quadratic solution to \eqref{ODE}. It is easy to see that there is a corresponding non-quadratic  entire admissible solution to \eqref{e1.10}. $\qed$

Next we prove Theorem \ref{t1.6} by Legendre transform.

{\bf Proof of Theorem \ref{t1.6}}\quad
The assumption that the gradient map $(b+a)x+Du$ or $(b-a)x-Du$ is surjective corresponds to $Dw$ or $x-Dw$ is surjective.

If $Dw$ is surjective, take the Legendre transformation
$$y=Dw,\qquad  w^*(y)=\langle x,Dw\rangle -w. $$

\noindent Then $D^2w^*>I$ and $w^*$ satisfies
$$\sum^n_{i=1}\ln(\mu^*_i-1)=-w^*+\frac{1}{2}\langle y,Dw^*\rangle,$$
where $\mu^*=(\mu^*_1,...,\mu^*_n)$ are the eigenvalues of $D^2w^*$. Since $Dw$ is surjective, $w^*$ is defined on the whole $\mathbb{R}^n$.  Then $w^*-\frac{|x|^2}{2}$ is an entire convex solution on $\mathbb{R}^n$ to \eqref{e1.5} up to a trivial affine transform. According to Theorem 1.1 in \cite{DX}, $w^*-\frac{|x|^2}{2}$ must be a quadratic polynomial. So are $w^*$, $w$ and $u$. It follows that $\Omega=\mathbb{R}^n$.

If $x-Dw$ is surjective, let $w^-=\frac{|x|^2}{2}-w$. Then $w^-$ is convex and satisfies \eqref{effective2}. Also, $Dw^-$ is surjective. According to the discussion above, $w^-$ is a quadratic function. So are $w$ and $u$. It also follows that $\Omega=\mathbb{R}^n$. This completes the proof.$\qed$

\textbf{Aknowledgement}
Rongli Huang and Qianzhong Ou would   like to thank Professors Jiguang Bao, Xinan Ma and Yuanlong Xin for their interests and constant encouragement. Wenlong Wang is grateful to  Professors Yuguang Shi,    Yu Yuan and   Xiaohua Zhu  for their encouragement and constant support.

\end{document}